\numberwithin{equation}{section}
\pgfplotsset{compat=1.17}
\newtheorem{theorem}{Theorem}
\newtheorem{proposition}[theorem]{Proposition}
\journal{TBA}
\newcommand{\AbbrIwa}{SPACES}
\newcommand{\AbbrIlpRef}{ILP-REF}
\newcommand{\AbbrIlpOur}{ILP-SPACES}
\newcommand{\AbbrBAB}{B\&B-SPACES}
\newcommand{\AbbrPreProc}{P-P}
\newcommand{\AbbrObjective}{\textit{ub}}
\newcommand{\AbbrLb}{\textit{lb}}
\newcommand{\AbbrTime}{\textit{t}}
\newcommand{\AbbrTimeLimit}{TLR}
\newcommand{\DataSingleOff}{NOSBY}
\newcommand{\DataMultiSby}{TWOSBY}
\newcommand{\Second}[1]{\SI{#1}{\second}}
\newcommand{\Optimum}[1]{\bfseries #1}
\newcommand{\TabColSep}{0.7cm}
\newcommand{\DefTerm}[1]{\emph{#1}} 
\newcommand{\AsymCompUpper}[1]{\mathcal{O}(#1)}
\newcommand{\IdxAnother}[1]{#1^{\prime}} %
\DeclareDocumentCommand \SetJobs {o} {
  \IfNoValueTF{#1} {
        \mathcal{J}
  }{
        \mathcal{J}_{#1}
  }
}
\newcommand{\SetJobsPartial}{\widehat{\SetJobs}}
\newcommand{\SetJobsLb}{\SetJobs^{\textnormal{lb}}}
\newcommand{\SetIntervals}{\mathcal{I}} 
\DeclareDocumentCommand \SetProcIntervals {o} {
  \IfNoValueTF{#1} {
        \SetIntervals^{\StateProc}
  }{
        \SetIntervals^{\StateProc}_{#1}
  }
}
\newcommand{\SetStates}{\mathcal{S}} 
\newcommand{\SetIntNonNeg}{\mathbb{Z}_{\geq 0}} 
\newcommand{\SetIntPos}{\mathbb{Z}_{> 0}} 
\newcommand{\SetVertTrans}{V^{\textnormal{is}}} 
\newcommand{\SetEdgesTrans}{E^{\textnormal{is}}} 
\newcommand{\Job}[1]{J_{#1}} 
\newcommand{\JobPart}[1]{J^{1}_{#1}} 
\newcommand{\JobGCD}[1]{J^{\gcd}_{#1}} 
\newcommand{\Interval}[1]{#1}
\newcommand{\IdxJob}{j}
\newcommand{\IdxInterval}{i}
\newcommand{\IdxState}{s}
\newcommand{\NumJobs}{n}
\newcommand{\NumIntervals}{h}
\newcommand{\NumStates}{\vert \SetStates \vert}
\newcommand{\ProcTime}[1]{p_{#1}}
\newcommand{\EnergyCost}[1]{c_{#1}}
\newcommand{\EnergyCostVector}{\bm{c}}
\newcommand{\TotalEnergyCost}[1]{c^{\text{TEC}}(#1)}  
\newcommand{\TransTimeSymbol}{T}
\DeclareDocumentCommand \TransTime {o} {
  \IfNoValueTF{#1} {
        \TransTimeSymbol
  }{
        \TransTimeSymbol(#1)
  }
}
\newcommand{\TransPowerSymbol}{P}
\DeclareDocumentCommand \TransPower {o} {
  \IfNoValueTF{#1} {
        \TransPowerSymbol
  }{
        \TransPowerSymbol(#1)
  }
}
\newcommand{\ProblemNotation}{1,\text{TOU} \vert \, \text{states} \, \vert  \text{TEC}}
\newcommand{\StateOff}{\textup{\texttt{off}}}
\newcommand{\StateIdle}{\textup{\texttt{idle}}}
\newcommand{\StateProc}{\textup{\texttt{proc}}}
\newcommand{\SolVecStarts}{\bm{\sigma}}
\newcommand{\SolStarts}[1]{\sigma_{#1}}
\newcommand{\SolVecTrans}{\boldsymbol{\Omega}}
\newcommand{\SolTrans}[1]{\Omega_{#1}}
\newcommand{\IntInterval}[2]{\{#1, \dots, #2\}}
\newcommand{\VertTrans}[2]{v^{\textnormal{is}}_{#1,#2}}   
\newcommand{\WeightTransSymbol}{w^{\textnormal{is}}}
\DeclareDocumentCommand \WeightTrans {o} {
  \IfNoValueTF{#1} {
        \WeightTransSymbol
  }{
        \WeightTransSymbol(#1)
  }
}
\newcommand{\PathCostTransSymbol}{l^{\textnormal{is}}}
\DeclareDocumentCommand \PathCostTrans {o} {
  \IfNoValueTF{#1} {
        \PathCostTransSymbol
  }{
        \PathCostTransSymbol(#1)
  }
}
\newcommand{\OptCostTransSymbol}{c^{\star}}
\DeclareDocumentCommand \OptCostTrans {o} {
  \IfNoValueTF{#1} {
        \OptCostTransSymbol
  }{
        \OptCostTransSymbol(#1)
  }
}
\newcommand{\OptPathTransSymbol}{\SolVecTrans^{\star}}
\DeclareDocumentCommand \OptPathTrans {o} {
  \IfNoValueTF{#1} {
        \OptPathTransSymbol
  }{
        \OptPathTransSymbol(#1)
  }
}
\newcommand{\FirstOn}{{\textit{early}}} 
\newcommand{\LastOn}{{\textit{late}}} 
\newcommand{\JobCost}[2]{c^{\text{(job)}}_{#1, #2}} 
\DeclareMathOperator{\CmdCpPack}{\textsc{Pack}}
\DeclareDocumentCommand \CpJobsSeq {o} {
  \IfNoValueTF{#1} {
        \pi
  }{
        \pi_{#1}
  }
}
\newcommand{\FixSeqPos}{\ell}
\newcommand{\FixSeqSymbol}{\pi}
\newcommand{\FixSeqPartialSymbol}{\widehat{\FixSeqSymbol}}
\newcommand{\FixSeqPartialLbSymbol}{\FixSeqPartial^{\textnormal{lb}}}
\DeclareDocumentCommand \FixSeq { o } {
    \IfNoValueTF {#1} {
        \FixSeqSymbol
    }{
        \FixSeqSymbol(#1)
    }
}
\DeclareDocumentCommand \FixSeqPartial { o } {
    \IfNoValueTF {#1} {
        \FixSeqPartialSymbol
    }{
        \FixSeqPartialSymbol(#1)
    }
}
\DeclareDocumentCommand \FixSeqPartialLb { o } {
    \IfNoValueTF {#1} {
        \FixSeqPartialLbSymbol
    }{
        \FixSeqPartialLbSymbol(#1)
    }
}
\newcommand{\IdxProcSequence}{k}
\newcommand{\ProcSequenceSymbol}{\Lambda}
\DeclareDocumentCommand \ProcSequence { o } {
    \IfNoValueTF {#1} {
        \ProcSequenceSymbol
    }{
        \ProcSequenceSymbol_{#1}
    }
}
\newcommand{\SetVertTec}{V^{\textnormal{ji}}}
\newcommand{\SetEdgesTec}{E^{\textnormal{ji}}}
\newcommand{\WeightTecSymbol}{w^{\textnormal{ji}}}
\DeclareDocumentCommand \WeightTec {o} {
  \IfNoValueTF{#1} {
        \WeightTecSymbol
  }{
        \WeightTecSymbol(#1)
  }
}
\newcommand{\VertTecSource}{v^{\textnormal{ji}}_{0}}
\newcommand{\VertTecSink}{v^{\textnormal{ji}}_{\NumJobs+1}}
\newcommand{\VertTec}[2]{v^{\textnormal{ji}}_{#1,#2}}   
\newcommand{\ProblemBinPack}{\mathcal{P}_{\text{Bin-Pack}}}
\newcommand{\ProblemBinFind}{\mathcal{P}_{\text{Bin-Find}}}
\definecolor{ClrLightgray}{RGB}{220,220,220}
\definecolor{Clr1}{RGB}{199,229,242}
\definecolor{Clr2}{RGB}{95,171,182}
\definecolor{Clr3}{RGB}{187,217,118}
\definecolor{Clr4}{RGB}{244,209,73}
\definecolor{Clr5}{RGB}{235,121,107}
\newcommand{\xmark}{\ding{55}}%
\newenvironment{Example}[1]
{
  \textbf{Example #1:}\begin{itshape}%
}%
{\end{itshape}}
\newenvironment{ExampleCont}[1]
{
  \textbf{Example #1 (continued):}\begin{itshape}%
}%
{\end{itshape}}
\begin{document}

\begin{frontmatter}
\title{Green Scheduling with Time-of-Use Tariffs and Machine States: Optimizing Energy Cost via Branch-and-Bound and Bin Packing Strategies}

\author[addressCIIRC]{Ondřej Benedikt}
\ead{ondrej.benedikt@cvut.cz}

\author[addressCIIRC]{István Módos}
\ead{istvan.modos@cvut.cz}

\author[addressCIIRC]{Antonin Novak\corref{mycorrespondingauthor}}
\cortext[mycorrespondingauthor]{Corresponding author}
\ead{antonin.novak@cvut.cz}

\author[addressCIIRC]{Zdeněk Hanzálek}
\ead{zdenek.hanzalek@cvut.cz}

\address[addressCIIRC]{Czech Institute of Informatics, Robotics and Cybernetics, Czech Technical University in Prague, Czech Republic}

\begin{abstract}
This paper presents a branch-and-bound algorithm, enhanced with bin packing strategies, for scheduling under variable energy pricing and power-saving states. The proposed algorithm addresses the $\ProblemNotation$ problem, which involves scheduling jobs to minimize total energy cost (TEC) while considering time-of-use (TOU) electricity prices and different machine states (e.g., processing, idle, off). Key innovations include instance pre-processing for rapid lower bound calculations, a novel branching scheme combined with initializations, a block-finding primal heuristic, and a tighter lower bound for jobs with non-coprime processing times. These enhancements result in an efficient algorithm capable of solving benchmark instances with real energy prices with 200 jobs more than 100~times faster than existing state-of-the-art methods.
\end{abstract}

\begin{keyword}
Scheduling \sep Single machine \sep Time of Use \sep Machine states \sep Variable energy costs \sep Total energy cost minimization \sep Branch and Bound.
\end{keyword}

\end{frontmatter}




\section{Introduction}
Optimizing the efficient use of energy in manufacturing facilities is crucial in today's world for several reasons. First, energy costs are rising continuously, and producing products and providing services is becoming more expensive. Therefore, businesses must find ways to lower their energy bills to remain competitive and increase profitability. This is especially important for the automotive or glass and steel industries, which rely on furnaces~\citep{BENEDIKT2021105167}, industrial robots~\citep{BUKATA201952}, and similar machines that require a lot of energy. 
Second, energy consumption is a major contributor to greenhouse gas emissions, which are the leading cause of climate change. By reducing energy consumption, businesses can lower their carbon footprint and help mitigate the effects of climate change.
Moreover, some governments, e.g., EU member states, are implementing regulations and incentives to encourage businesses to reduce their carbon emissions~\citep{maris2021green}. 
Further, rising energy costs, utility companies, and policymakers incentivize businesses to participate in demand-response programs and to plan their operations according to the use of their high-consumption machines to hedge against challenges in the energy market~\citep{eid2016time}.

With demand response programs such as Time-Of-Use (TOU)~\citep{CHEN2019900,2020:hung} tariffs and Real-Time Pricing (RTP)~\citep{2019:abikarram}, businesses can lower their costs by rescheduling their production to hours with cheaper energy or by changing the speeds of the machines. 
Furthermore, efficient use of the alternative processing modes of the machines or turning them off altogether decreases energy consumption. Nevertheless, the turn-on of a machine also requires a considerable amount of energy; thus, keeping the machine idling is sometimes the right choice.

Therefore, production scheduling with variable energy pricing and power-saving modes represents a very complex optimization problem that requires greater investigation.


To address these challenges, we study a single-machine scheduling problem with time-of-use tariffs defining the cost of energy in each time interval.
The scheduling resource is stateful and is described by a transition diagram, defining the duration and consumption of transits between off, processing, and idle states, as well as the consumption in these states.
The goal is to process the set of jobs during processing states so that the total energy cost (TEC) consumed by job processing and transitions between the machine states is minimized.
The problem denoted in an extended three-field notation as $\ProblemNotation$ was originally introduced in \cite{2014:Shrouf}. 
Later on, it was shown to be strongly $\textsf{NP}$-hard in \cite{aghe2019toustates}.

\subsection{Paper Contributions and Outline}\label{sec:contribution}
We revisit the problem $\ProblemNotation$ with new insights with the goal of developing a scalable, exact branch-and-bound algorithm called \AbbrBAB{} that achieves state-of-the-art results, outperforming the former mixed-integer linear programming models both of~\cite{2014:Shrouf} and \cite{2020a:Benedikt} by two orders of magnitude.
Among these insights, recognizing the bin-packing structure of the problem~\citep{grus_icores24} and utilizing it in different aspects of \AbbrBAB{} proves to be the major source of increased efficiency.

More specifically, the main contributions are:
\begin{enumerate}
    \item a fast branch-and-bound algorithm able to solve instances with hundreds of jobs over more than a thousand time intervals, outperforming the former state-of-the-art method (see \cref{sec:bab}),
    \item employing instance pre-processing for lower bound computation, improving the complexity of the algorithm for a fixed sequence by~\cite{aghe2019toustates} (see \cref{sec:bab-lower-bound}),
    \item proposing a tighter lower bound for non-coprime processing times (see \cref{sec:bab-lb-gcd}),
    \item new bin-packing primal heuristics for computing upper bounds inside the search tree (see \cref{sec:binpack_primal_heur}),
    \item new initialization heuristics based on the \emph{bin-finding} problem (see \cref{sec:bin_finding}).
\end{enumerate}

The rest of the paper is structured as follows.
In \cref{sec:probstat}, we formally define the statement of the scheduling problem under study together with the key notation and symbols used.
The relevant literature surveying the related problem is given in \cref{sec:relwork}.
The two key concepts---the optimal switching problem and computing TEC for a given fixed sequence of the jobs are explained in \cref{sec:spaces} and \cref{sec:fixseq_tec}.
Further algorithmic components, i.e., lower and upper bounds of \AbbrBAB{} algorithm, are described in \cref{sec:bab}.
Finally, we demonstrate the efficiency of the resulting method on the benchmark instances of various parameters in Section~\ref{sec:experiments}.

\section{Problem Statement}\label{sec:probstat}

We consider a scheduling problem whose horizon is divided into a set of $h\in\SetIntPos$ non-overlapping intervals $\SetIntervals = \{1, 2, \dots, \IdxInterval, \ldots, \NumIntervals\}$. 
It is assumed that every interval is one unit long. 
 
The physical representation of the time unit length can be different depending on the desired granularity.
For example, most typically, a single interval will resemble 15-minute or 1-hour long intervals, as the energy markets operate on these scales.  
Scheduling over one thousand intervals resembles operations spanning over more than a week-long horizon, matching the real conditions commonly encountered on the market (such as OTE in the Czech Republic) regarding the forecast and auctions.
For every interval $\Interval{\IdxInterval}$, the \DefTerm{energy cost} $\EnergyCost{\IdxInterval} \in \mathbb{Q}$ is given. The vector of energy costs is denoted by $\EnergyCostVector$, where $\EnergyCostVector = (\EnergyCost{1}, \EnergyCost{2}, \dots, \EnergyCost{\NumIntervals})$.

Note that we assume that the energy cost can be even negative. 
Although this is likely not to be the realistic case with, e.g., natural gas, with electricity, the situation is different. 
For example, due to the increasing amount of power deployed in renewable sources every year, from April to August 2024 in the Czech Republic, there were nearly 50 days with intervals having negative electricity cost---up to 3 times more than the year before.

\begin{figure}[ht]
    \centering
    {
\newcommand{\ImNumInts}{20} 
\newcommand{\ImNumRows}{7} 
\newcommand{\ImGridSize}{0.5} 

\newcommand{\ImIntervalRow}{7}
\newcommand{\ImPriceRow}{6}
\newcommand{\ImProcessingRow}{5}
\newcommand{\ImIdleRow}{4}
\newcommand{\ImOffRow}{3}
\newcommand{\ImTurnOnRow}{1}
\newcommand{\ImTurnOffRow}{2}
\newcommand{\ImScheduleRow}{0}

\begin{tikzpicture}

	\tikzset{cell/.style = {rectangle, minimum width=\ImGridSize cm,	minimum height = \ImGridSize cm, inner sep=0pt,
  text width=\ImGridSize cm, align=center},
	graycell/.style = {rectangle, minimum width=\ImGridSize cm,	minimum height = \ImGridSize cm, fill=ClrLightgray,  inner sep=0pt,
  text width=\ImGridSize cm, align=center},
	desc/.style = {anchor=east, xshift=-0.4 cm}}

	\foreach \x in {0,...,\ImNumInts}
	{
		\foreach \y in {0,...,\ImNumRows}
		{
			\pgfmathsetmacro{\crdX}{(\x-1) * \ImGridSize + \ImGridSize/2};
        		\pgfmathsetmacro{\crdY}{(\y-1) * \ImGridSize + \ImGridSize/2};        		
        		\coordinate (\x-\y) at (\crdX,\crdY);        		
		}
	}

	\foreach \i in {1,...,\ImNumInts} {
		\node[graycell] at (\i-\ImIntervalRow) {\scriptsize  $\Interval{\tiny \i}$};
	}
	
	
	\foreach \i/\price in {1/9, 2/7, 3/9, 4/13, 5/3, 6/11, 7/3, 8/13, 9/6, 10/7, 11/60, 12/4, 13/10, 14/6, 15/9, 16/3, 17/14, 18/0, 19/4, 20/6} {
		\node[graycell] at (\i-\ImPriceRow) {\scriptsize \price};
	}
	
	\foreach \i/\price/\clr in {14/24/Clr1,7/12/Clr1,8/52/Clr1,15/36/Clr1,16/12/Clr1,17/56/Clr1,18/0/Clr1} {
		\node[cell, fill=\clr] at (\i-\ImProcessingRow) {\scriptsize \price};
	}
	
	
	\foreach \i/\price/\clr in {1/0/Clr3,2/0/Clr3, 3/0/Clr3, 4/0/Clr3,10/0/Clr3, 11/0/Clr3, 20/0/Clr3} {
		\node[cell, fill=\clr] at (\i-\ImOffRow) {\scriptsize \price};
	}
	
	\foreach \i/\price/\clr in {5/15/Clr4,6/55/Clr4,12/20/Clr4,13/50/Clr4} {
		\node[cell, fill=\clr] at (\i-\ImTurnOnRow) {\scriptsize \price};
	}
	
	\foreach \i/\price/\clr in {9/6/Clr5,19/4/Clr5} {
		\node[cell, fill=\clr] at (\i-\ImTurnOffRow) {\scriptsize \price};
	}
	
	\node[desc] at (1-\ImIntervalRow) {\scriptsize Interval $\Interval{\IdxInterval}$};
	\node[desc] at (1-\ImPriceRow) {\scriptsize Energy cost $\EnergyCost{\IdxInterval}$};
	\node[desc] at (1-\ImProcessingRow) {\scriptsize Processing};
	\node[desc] at (1-\ImIdleRow) {\scriptsize Idle};
	\node[desc] at (1-\ImOffRow) {\scriptsize Off};
	\node[desc] at (1-\ImTurnOnRow) {\scriptsize Turn-on ($\nearrow$)};
	\node[desc] at (1-\ImTurnOffRow) {\scriptsize Turn-off ($\searrow$)};	
	
	\draw[step=\ImGridSize,black] (0,0) grid (\ImNumInts*\ImGridSize, \ImNumRows*\ImGridSize);
	
	\node at (0,-0.7 cm) {};  
	
	\begin{scope}[transform canvas={yshift=-0.2cm}]
		\node[desc] at (1-\ImScheduleRow) {\scriptsize Schedule};
		
		\foreach \xs/\xe/\lbl/\clr in {
			1/4/$\StateOff$/Clr3,
			5/6/$\nearrow$/Clr4,
			7/8/$\Job{2}$/Clr1,
			9/9/$\searrow$/Clr5,
			10/11/$\StateOff$/Clr3,
			12/13/$\nearrow$/Clr4,
			14/14/$\Job{1}$/Clr1,
			15/18/$\Job{3}$/Clr1,
			19/19/$\searrow$/Clr5,
			20/20/$\StateOff$/Clr3} {
			\draw[fill=\clr] ($ (\xs-0) - 1/2*(\ImGridSize,\ImGridSize) $) rectangle ($ (\xe-0) + 1/2*(\ImGridSize,\ImGridSize) $) node[pos=0.5] {\tiny \lbl};
		}		
	\end{scope}
	
\end{tikzpicture}
}
    \caption{Example of a schedule to illustrate the notation; optimal solution with the objective 342.}
    \label{fig:example-schedule}
\end{figure}

During each interval, the machine operates in one of its \DefTerm{states} $\IdxState \in \SetStates$ or transits from one state to another.
We assume the existence of at least two states: the \DefTerm{off state} $\StateOff \in \SetStates$ and the \DefTerm{processing state} $\StateProc \in \SetStates$.
The machine is assumed to be operating in $\StateOff$ state during the first and the last interval. Also, when a job is processed, the machine must be operating in $\StateProc$ state.

The \DefTerm{transition time function} and the \DefTerm{transition power function} are denoted by $\TransTime : \SetStates \times \SetStates \rightarrow \SetIntNonNeg \cup \{\infty\}$ and \hbox{$\TransPower : \SetStates \times \SetStates \rightarrow \mathbb{Q}_{\geq0} \cup \{ \infty \}$}, respectively.
The symbol $\infty$ is used to denote that the direct transition between states $(\IdxState,\IdxAnother{\IdxState})\in\SetStates\times\SetStates$ does not exist.
The transition from state $\IdxState$ to state $\IdxAnother{\IdxState}$ lasts $\TransTime[\IdxState,\IdxAnother{\IdxState}]$ intervals and has power consumption  $\TransPower[\IdxState,\IdxAnother{\IdxState}]$, which is the constant rate of the consumed energy at every time unit.
Further, we assume that $\TransTime[\IdxState,\IdxState]=1$, meaning that the power consumption of the machine while remaining in state $\IdxState$ for the duration of one interval is $\TransPower[\IdxState,\IdxState]$.
Note that the transition time and power functions are general enough to represent different kinds of manufacturing machines, e.g., those studied in~\citep{2018:Aghelinejad,2020a:Benedikt,2007:mouzon,2014:Shrouf}.

Let $\SetJobs$ be a set of $n$ \DefTerm{jobs} $\{\Job{1}, \Job{2}, \dots, \Job{\IdxJob}, \ldots, \Job{\NumJobs} \}$ to be scheduled.
Each job $\Job{\IdxJob}$ is characterized by its \DefTerm{processing time} $\ProcTime{\IdxJob} \in \SetIntPos$, given in the number of intervals.
The cost of processing a job inside an interval $\Interval{\IdxInterval}$ is the same for all jobs.
Jobs are non-preemptive, and the machine can process at most one job at a time.
All the jobs are available at the beginning of the scheduling horizon.

A \DefTerm{solution} is defined by a pair $(\SolVecStarts, \SolVecTrans)$, where $\SolVecStarts = (\SolStarts{1}, \SolStarts{2}, \dots, \SolStarts{\NumJobs}) \in \SetIntNonNeg^{\NumJobs}$ represents start times of the jobs, and $\SolVecTrans = (\SolTrans{1}, \SolTrans{2}, \dots, \SolTrans{\NumIntervals}) \in (\SetStates \times \SetStates)^{\NumIntervals}$ captures the active state or transition in each interval, where $\SolTrans{\IdxInterval}=(\StateOff,\StateProc)$ represents turning on, $\SolTrans{\IdxInterval}=(\StateProc,\StateProc)$ represents staying in the processing state, $\SolTrans{\IdxInterval}=(\StateProc,\StateIdle)$ represents transition from processing into an idle state, $\SolTrans{\IdxInterval}=(\StateIdle,\StateIdle)$ staying in an idle state, $\SolTrans{\IdxInterval}=(\StateIdle,\StateProc)$ is transition from idle to processing state while $\SolTrans{\IdxInterval}=(\StateProc,\StateOff)$ represents the transition from the processing state to the off state.



The solution is \DefTerm{feasible} if the following four conditions are satisfied:

\begin{enumerate}
    \item the machine processes at most one job at a time,
    
    \item the jobs are processed only when the machine is in $\StateProc$ state,
    \item the machine is in $\StateOff$ state during the first and the last interval, 
    \item transition between states $\IdxState$ and $\IdxAnother{\IdxState}$ takes $\TransTime[\IdxState,\IdxAnother{\IdxState}]$ time intervals.
\end{enumerate}
The total energy cost (TEC) of solution $(\SolVecStarts, \SolVecTrans)$ is  calculated as
\begin{equation} \label{eq:tec}
    \sum\limits_{\Interval{\IdxInterval} \in \SetIntervals} \EnergyCost{\IdxInterval} \cdot \TransPower[\SolTrans{\IdxInterval}],
\end{equation}
where $\TransPower[\SolTrans{\IdxInterval}]$ represents $\TransPower[\IdxState,\IdxAnother{\IdxState}]$ for $\SolTrans{\IdxInterval} = (\IdxState, \IdxAnother{\IdxState})$.
The goal of the scheduling problem is to find a feasible solution such that the total energy cost \eqref{eq:tec} is minimized.

During the first and the last interval, the machine is assumed to be in off state $\StateOff \in \SetStates$ (see 3. above).
The machine has a single processing state $\StateProc \in \SetStates$, which must be active during the processing of the jobs (see 2. above). 
Due to the transition time from/to the initial/last $\StateOff$ state, the machine cannot be in the $\StateProc$ state during (several) early/late intervals.
Hence, we denote the \DefTerm{earliest} and the \DefTerm{latest} interval during which the machine can be in $\StateProc$ state by $\Interval{\FirstOn}$ and $\Interval{\LastOn}$, respectively.
Consequently, the set of \DefTerm{processing intervals} is denoted as \( \SetProcIntervals = \{ \Interval{\FirstOn}, \Interval{\FirstOn + 1}, \dots, \Interval{\LastOn} \}\).
The definition of the \textit{earliest} and the \textit{latest} processing intervals becomes useful later in the problem of the optimal TEC computation for a fixed job sequence in \cref{sec:fixseq_tec}.

\begin{Example}{1}\label{ex:example1}
Let us consider a small example with three jobs, $\SetJobs{} = \{\Job{1}, \Job{2}, \Job{3}\}$ with processing times $\ProcTime{1} = 1$, $\ProcTime{2} = 2$, and $\ProcTime{3} = 4$, respectively. Let the scheduling horizon contain 20 intervals, $\SetIntervals = \{ \Interval{1}, \dots, \Interval{20} \}$, with energy costs $\EnergyCostVector = (9, 7, 9, 13, 3, 11, 3, 13, 6, 7, 60, 4, 10, 6, 9, 3, 14, 0, 4, 6)$. Considering the machine states and transitions, let us use the transition graph proposed in \cite{2014:Shrouf}, which is also depicted in \cref{fig:example-func-power-time}.
The optimal solution to this instance is depicted in \cref{fig:example-schedule}. Its TEC is equal to $342$.

\end{Example}
\begin{figure}[ht]
    \centering
    \begin{tabular}{ccc}
    {
\newcommand{\ImNumCols}{3} 
\newcommand{\ImNumRows}{4} 
\newcommand{\ImGridSize}{0.7} 

\begin{tikzpicture}
	\foreach \x in {0,...,\ImNumCols}
	{
		\foreach \y in {0,...,\ImNumRows}
		{
			\pgfmathsetmacro{\crdX}{(\x-1) * \ImGridSize + \ImGridSize/2};
        		\pgfmathsetmacro{\crdY}{(\y-1) * \ImGridSize + \ImGridSize/2};        		
        		\coordinate (\x-\y) at (\crdX,\crdY);        		
		}
	}
	
	\pgfmathsetmacro{\drawRows}{\ImNumRows - 1};
	
	\draw (0,0) -- (0,\drawRows*\ImGridSize);
	\draw (0,\drawRows*\ImGridSize) -- (\ImNumCols*\ImGridSize,\drawRows*\ImGridSize);
	
	\node at (1-3) {4};
	\node at (1-2) {0};
	\node at (1-1) {5};
	
	\node at (2-3) {0};
	\node at (2-2) {2};
	\node at (2-1) {$\infty$};
	
	\node at (3-3) {1};
	\node at (3-2) {$\infty$};
	\node at (3-1) {0};
	
	\node[anchor=east] at ($(0-3) + (\ImGridSize/3,0)$) {\scriptsize $\StateProc$};
	\node[anchor=east] at ($(0-2) + (\ImGridSize/3,0)$) {\scriptsize $\StateIdle$};
	\node[anchor=east] at ($(0-1) + (\ImGridSize/3,0)$) {\scriptsize $\StateOff$};	
	
	\node at ($(1-4) + (0,-\ImGridSize/5)$) {\scriptsize $\strut \StateProc$};
	\node at ($(2-4) + (0,-\ImGridSize/5)$) {\scriptsize $\strut \StateIdle$};
	\node at ($(3-4) + (0,-\ImGridSize/5)$) {\scriptsize $\strut \StateOff$};
	
	\draw ($(1-3) + (-\ImGridSize/2,\ImGridSize/2)$) -- ($(1-3) + (-3*\ImGridSize/2,\ImGridSize)$);
	\node[anchor=north east] at ($(0-4) + (0,-\ImGridSize/6)$) {$\IdxState$};
	\node[anchor=south west] at ($(0-4) + (-\ImGridSize/5,-\ImGridSize/6)$) {$\IdxAnother{\IdxState}$};
	
	\node at (2-0) {$\strut \TransPower[\IdxState, \IdxAnother{\IdxState}]$};
\end{tikzpicture}
} &
    {
\newcommand{\ImNumCols}{3} 
\newcommand{\ImNumRows}{4} 
\newcommand{\ImGridSize}{0.7} 

\begin{tikzpicture}
	\pgfmathsetmacro{\prepareLayersX}{\ImNumCols+1};
	\pgfmathsetmacro{\prepareLayersY}{\ImNumRows+1};
	\foreach \x in {0,...,\prepareLayersX}
	{
		\foreach \y in {0,...,\prepareLayersY}
		{
			\pgfmathsetmacro{\crdX}{(\x-1) * \ImGridSize + \ImGridSize/2};
        		\pgfmathsetmacro{\crdY}{(\y-1) * \ImGridSize + \ImGridSize/2};        		
        		\coordinate (\x-\y) at (\crdX,\crdY);        		
		}
	}
	
	\pgfmathsetmacro{\crdX}{0*\ImGridSize};
	\pgfmathsetmacro{\drawRows}{\ImNumRows - 1};
	
	\draw (0,0) -- (0,\drawRows*\ImGridSize);
	\draw (0,\drawRows*\ImGridSize) -- (\ImNumCols*\ImGridSize,\drawRows*\ImGridSize);
	
	\node at (1-3) {1};
	\node at (1-2) {0};
	\node at (1-1) {2};
	
	\node at (2-3) {0};
	\node at (2-2) {1};
	\node at (2-1) {$\infty$};
	
	\node at (3-3) {1};
	\node at (3-2) {$\infty$};
	\node at (3-1) {1};
	
	\node[anchor=east] at ($(0-3) + (\ImGridSize/3,0)$) {\scriptsize $\StateProc$};
	\node[anchor=east] at ($(0-2) + (\ImGridSize/3,0)$) {\scriptsize $\StateIdle$};
	\node[anchor=east] at ($(0-1) + (\ImGridSize/3,0)$) {\scriptsize $\StateOff$};	
	
	\node at ($(1-4) + (0,-\ImGridSize/5)$) {\scriptsize $\strut \StateProc$};
	\node at ($(2-4) + (0,-\ImGridSize/5)$) {\scriptsize $\strut \StateIdle$};
	\node at ($(3-4) + (0,-\ImGridSize/5)$) {\scriptsize $\strut \StateOff$};
	
	\draw ($(1-3) + (-\ImGridSize/2,\ImGridSize/2)$) -- ($(1-3) + (-3*\ImGridSize/2,\ImGridSize)$);
	\node[anchor=north east] at ($(0-4) + (0,-\ImGridSize/6)$) {$\IdxState$};
	\node[anchor=south west] at ($(0-4) + (-\ImGridSize/5,-\ImGridSize/6)$) {$\IdxAnother{\IdxState}$};

	\node at (2-0) {$\strut \TransTime[\IdxState, \IdxAnother{\IdxState}]$};
\end{tikzpicture}
} &
    \begin{tikzpicture}[auto,-stealth]
\tikzset{
    state/.style={circle,draw=black,inner sep=1pt, minimum size=0.8cm},
    every edge/.append style={-stealth,thick}
}
    
\node[state] at (1,0.6) (nProc) {\scriptsize $\strut \StateProc$};
\node[state] at (2.5,-0.7) (nIdle) {\scriptsize $\strut \StateIdle$};
\node[state] at (-0.5,-0.7) (nOff)  {\scriptsize $\strut \StateOff$};

\path (nProc) edge[bend left=15] node[above right, pos=0.8] {\scriptsize 0/0} (nIdle);
\path (nIdle) edge[bend left=15] node[below left, pos=0.2] {\scriptsize 0/0} (nProc);

\path (nProc) edge[bend left=15] node[below right, pos=0.8] {\scriptsize 1/1} (nOff);
\path (nOff) edge[bend left=15] node[above left, pos=0.2] {\scriptsize 2/5} (nProc);

\path (nProc) edge[loop above,looseness=4, out=105, in=75] node[left,pos=0.1] {\scriptsize 1/4} (nProc);

\path (nOff) edge[loop below,looseness=4, in=255, out=285] node[right,pos=0.2] {\scriptsize 1/0} (nOff);

\path (nIdle) edge[loop below,looseness=4, in=255, out=285] node[right,pos=0.2] {\scriptsize 1/2} (nIdle);

\node at (0,-2) {};
\end{tikzpicture}  \\
    \end{tabular}
    \caption{Parameters of the transition power function \( \TransPower[\IdxState, \IdxAnother{\IdxState}] \) and transition time function \( \TransTime[\IdxState, \IdxAnother{\IdxState}] \), and the corresponding transition graph, where every edge from $\IdxState$ to $\IdxAnother{\IdxState}$ is labeled by $\TransTime[\IdxState, \IdxAnother{\IdxState}]$/$\TransPower[\IdxState, \IdxAnother{\IdxState}]$.}
    \label{fig:example-func-power-time}
\end{figure}
Note that the transition graph in \cref{fig:example-func-power-time} considers transitions between $\StateProc$ and $\StateIdle$ states to take zero time and to consume zero energy while doing so.  
We argue that this is a rather physically unrealistic case that we follow from~\cite{2014:Shrouf}, but it serves as an example of extreme behavior that can be modeled with transition graphs.  
In general, including an $\StateIdle$ state into the model of a machine may be beneficial depending on the values of transition power and time functions, as demonstrated in the following example.

\begin{Example}{2}
    Consider an example of a machine modeled with two different transition diagrams. 
    The option in \cref{fig:isolines_no_idle} resembles a simplified model of the machine that neglects the possibility of utilizing its $\StateIdle$ state, while the option \cref{fig:isolines_idle} explicitly contains it.
    Each respective plot above the transition diagram displays TEC isolines (i.e., the areas with identical TEC values), depending on the values of the transition power function between $\{\StateOff,\StateProc\}$ states.
    The values are computed by sampling $20\times 20$ pairs of parameter values in the considered range while constructing an optimal schedule for a set of 5 jobs with processing times $\{1,2,2,4,5\}$ under the energy cost profile shown in \cref{fig:isolines_cost}. 
    The example shows (besides overall lower TEC in \cref{fig:isolines_idle}) how introducing an $\StateIdle$ state can change the landscape of TEC isolines, affecting trade-offs of transition power function values between $\StateOff$ and $\StateProc$ states. 

\begin{figure}[ht]
    \centering
    \begin{subfigure}{.5\textwidth}
    \centering
        \begin{tikzpicture}
    \begin{axis}[
            ylabel={$\TransPower[\StateOff,\StateProc]$},
            xlabel={$\TransPower[\StateProc,\StateOff]$},
            height=0.8\textwidth,
            width=0.8\textwidth,
        ]
        \addplot [
            contour prepared,
            contour prepared format=matlab,
                point meta min=528,
                point meta max=630
        ] table {other/contour_data_no_idle.txt};
    \end{axis}
    \tikzset{
    state/.style={circle,draw=black,inner sep=1pt, minimum size=0.8cm},
    every edge/.append style={-stealth,thick}
}
    
\node[state] at (4.9,-2.5) (nProc) {\scriptsize $\strut \StateProc$};
\node[state] at (0.8,-2.5) (nOff)  {\scriptsize $\strut \StateOff$};

\node at (0,-3.7) {};

\path (nProc) edge[bend left=15] node[below right, pos=0.8] {\scriptsize 1/$\TransPower[\StateProc,\StateOff]$} (nOff);
\path (nOff) edge[bend left=15] node[above right, pos=0.2] {\scriptsize 2/$\TransPower[\StateOff,\StateProc]$} (nProc);

\path (nProc) edge[loop below,looseness=4, out=105, in=75] node[left,pos=0.1] {\scriptsize 1/5} (nProc);

\path (nOff) edge[loop below,looseness=4, in=255, out=285] node[right,pos=0.2] {\scriptsize 1/0} (nOff);

    \end{tikzpicture}
      \caption{Machine without $\StateIdle$ state.}
      \label{fig:isolines_no_idle}
    \end{subfigure}%
    \begin{subfigure}{.5\textwidth}
      \centering
        \begin{tikzpicture}
        \begin{axis}[
                ylabel={$\TransPower[\StateOff,\StateProc]$},
                xlabel={$\TransPower[\StateProc,\StateOff]$},
                height=0.8\textwidth,
                width=0.8\textwidth,
            ]
            \addplot [
                contour prepared,
                contour prepared format=matlab,
                point meta min=528,
                point meta max=630
            ] table {other/contour_data_with_idle.txt};
        \end{axis}
        \tikzset{
    state/.style={circle,draw=black,inner sep=1pt, minimum size=0.8cm},
    every edge/.append style={-stealth,thick}
}
    
\node[state] at (3,-2) (nProc) {\scriptsize $\strut \StateProc$};
\node[state] at (5,-3) (nIdle) {\scriptsize $\strut \StateIdle$};
\node[state] at (1,-3) (nOff)  {\scriptsize $\strut \StateOff$};

\path (nProc) edge[bend left=15] node[above right, pos=0.8] {\scriptsize 1/0} (nIdle);
\path (nIdle) edge[bend left=15] node[below left, pos=0.2] {\scriptsize 1/1} (nProc);

\path (nProc) edge[bend left=15] node[below right, pos=0.8] {\scriptsize 1/$\TransPower[\StateProc,\StateOff]$} (nOff);
\path (nOff) edge[bend left=15] node[above left, pos=0.2] {\scriptsize 2/$\TransPower[\StateOff,\StateProc]$} (nProc);

\path (nProc) edge[loop above,looseness=4, out=105, in=75] node[left,pos=0.1] {\scriptsize 1/5} (nProc);

\path (nOff) edge[loop below,looseness=4, in=255, out=285] node[right,pos=0.2] {\scriptsize 1/0} (nOff);

\path (nIdle) edge[loop below,looseness=4, in=255, out=285] node[right,pos=0.2] {\scriptsize 1/1} (nIdle);
        \end{tikzpicture}
      \caption{Machine with $\StateIdle$ state.}
      \label{fig:isolines_idle}
    \end{subfigure}

    \vspace{1em}
    
    \begin{subfigure}{.7\textwidth}
    \centering
    \begin{tikzpicture}
        \begin{axis}[
            width=0.75\textwidth,
            height=0.25\textwidth,
            ybar, 
            label style={font=\small},
            tick label style={font=\scriptsize},
            xtick=data,
            xticklabel=\empty,
            ylabel={cost $\EnergyCost{\IdxInterval}$},
            xlabel={interval $\IdxInterval$ [-]},
            ymajorgrids,
            bar width=3pt,
            enlarge x limits={abs=0.75cm}, 
        ]
        \addplot table[x=idx, y=cost, col sep=comma] {other/isolines_c.txt};
        \end{axis}
    \end{tikzpicture}
    \caption{Example cost vector $\EnergyCostVector$ of TOU energy tariff.}\label{fig:isolines_cost}
    \end{subfigure}
    \caption{TEC isolines of a machine including and neglecting $\StateIdle$ state.}
    \label{fig:isolines}
\end{figure}
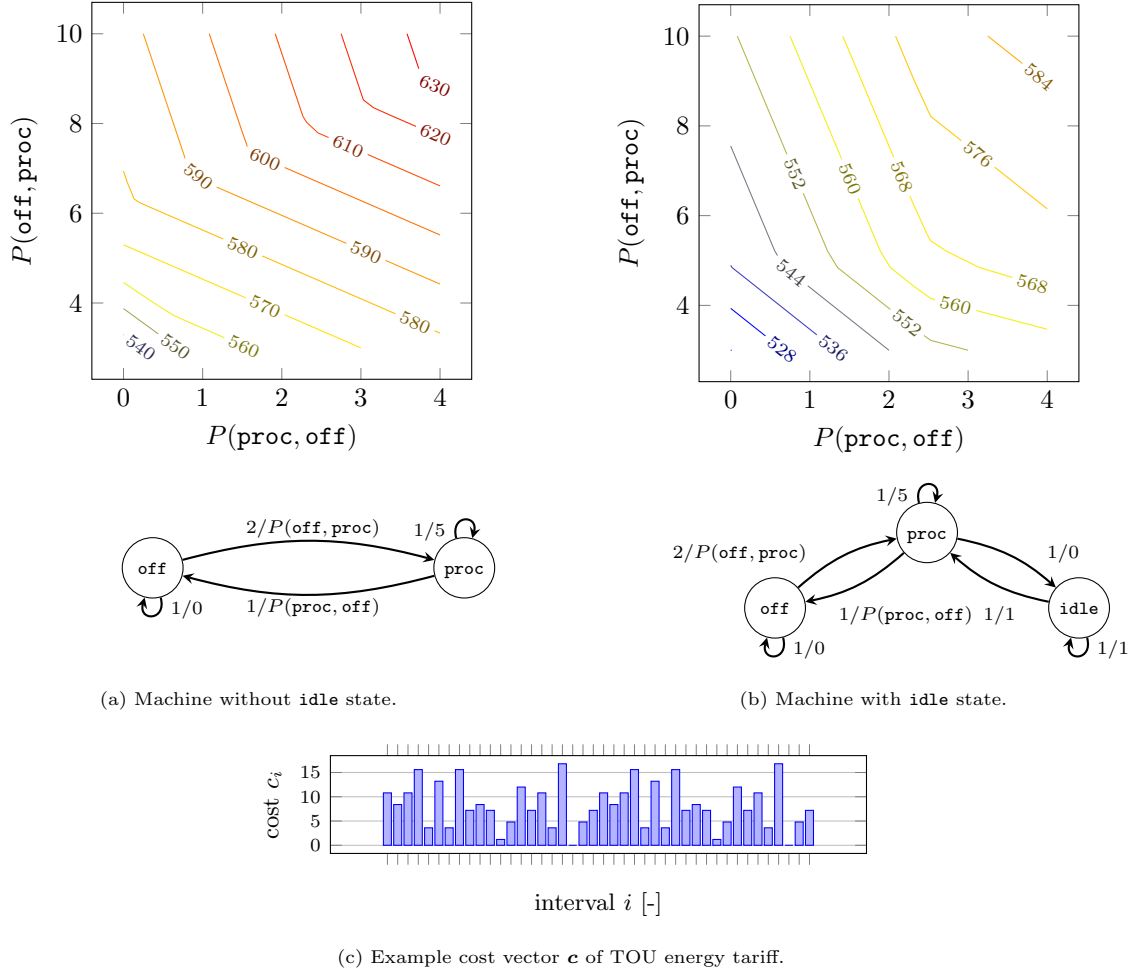
\end{Example}

The above-defined problem was introduced in \cite{2014:Shrouf} and is denoted in an extended Graham's notation as $\ProblemNotation$. 
The problem was shown to be strongly $\textsf{NP}$-hard in \cite{aghe2019toustates}.



\section{Related work}\label{sec:relwork}


The scheduling literature on energy-efficient scheduling and sustainable manufacturing concentrates primarily on the problems concerning Time-of-Use energy tariffs and the minimization of total energy costs~\citep{catanzaro2023jobsched_survey}.
The fundamental setting in these problems is that we are given a finite time horizon discretized into time intervals (e.g., 15 minutes), each associated with a cost for processing jobs inside the interval.
Jobs have processing time expressed in terms of an integer number of intervals.
The goal is to schedule all jobs on a single machine such that the total energy consumption is minimized, possibly subject to a constraint on the schedule makespan.
This basic scheduling problem with the time-of-use costs was coined by \cite{wan2010scheduling}.

Even the basic problem is quite practical and can be used to obtain some cost savings.
However, to match more complex use cases, many different variants were proposed.
In that respect, the most prominent research lines consider developing more complex machine environments, such as parallel identical machines~\citep{anghinolfi2021bi,tian2024single,gaggero2023exact}, unrelated machines~\citep{rego2022mathematical,9104021}, storage resources~\citep{ngueveu2022lower}, flow shops~\citep{ho2022exact,aghelinejad2020energy} or job shops~\citep{park2022energy}.

Other extensions of the problem consider different objective functions beyond TEC, mostly to express also the timelines of the schedule.
Among the most popular are bi-criteria and multi-objective approaches combining TEC with the makespan \citep{chen2021optimal,9104021} and total (weighted) tardiness \citep{rocholl2020bi, DING2021105088}.




Different approaches for bringing the scheduling models closer to reality consider more complex models of the job.
For example, considering non-uniform energy consumption~\citep{7807233}, speed~\citep{nattaf2017cumulative,fatih2018energy} (i.e., faster processing of the jobs in exchange for larger energy consumption), non-integer start times~\citep{CHEN2019900}, release times~\citep{WU2023136228} or scheduling with disjoint setup operations of energy-demanding machine reconfigurations~\citep{gnatowski2022scheduling}.
Typically, these were investigated in a single-machine setting first and later extended to more complex resource environments~\citep{wang2020multi}.

The complexity of particular problems also depends on the assumed properties of the energy cost vector~$\EnergyCostVector$.
For example, if $\EnergyCostVector$ takes a "pyramidal" shape (i.e., increasing and then decreasing), then the problem can become solvable in polynomial time~\citep{fang2016singlemach_tou}.
However, in realistic costs, this shape typically does not occur.
Further results of \cite{CHEN2019900} show that the problem complexity can depend on the number of "valleys" of the energy cost vector $\EnergyCostVector$.
The basic problem with makespan minimization, where energy consumption represents a hard constraint, does admit a 1.34-approximation algorithm~\citep{LI2024882}.

However, all the above extensions of the basic time-of-use scheduling problem disregard the fact that often the manufacturing resources are stateful machines~\citep{2018:Aghelinejad,2020a:Benedikt,2007:mouzon,2014:Shrouf}---meaning the machine has several operational modes, such as off, power saving, and processing mode.
The transitions between machine states are governed by the control logic and machine dynamics.
The operations on the jobs are performed only when the machine is in one of its processing states, while significant energy savings can be achieved by switching the machine to idle mode or even turning it off when the energy price is high.
Switching between machine operational modes adds another layer of complexity to the already \textsf{NP}-hard scheduling problem with time-of-use costs.
In this sense, the previous works that disregarded machine dynamics and operation modes can be seen as simplified problems where one assumes that the machine operates only in its processing state uninterrupted over the whole scheduling horizon.

The problem with simultaneous scheduling of jobs and switching operational states of the machine was first introduced by~\cite{2014:Shrouf}, denoted in three-field scheduling notation as $\ProblemNotation$.
The authors proposed an Integer Linear Programming (ILP) model for a single-machine environment and fixed order of the jobs.
Later, \cite{aghe2019toustates}~proved the problem without fixed order is hard, but under the fixed order of the jobs, it becomes polynomial-time solvable.
The restriction on the fixed order of jobs was lifted in \cite{2018:Aghelinejad}, where an ILP model was proposed to finally solve the original problem.
However, this approach was able to solve instances with up to 35 jobs to optimality.
Different variants of the problems were further studied.
A case of the problem with preemptive jobs was studied in \cite{2017Aghelinejad_preemption} or extensions of the problem into flow shop setting~\citep{AGHELINEJAD202011156,Aghelinejad_2020}.

Later, \cite{2020a:Benedikt}~realized that the performance of ILP and Constraint Programming (CP) models can be greatly enhanced by utilizing a pre-processing technique called SPACES that precomputes the optimal switching behavior of the machine between different intervals (since the costs differ) in advance and using these precomputed sequences inside the ILP and CP models.
This led to a new state-of-the-art performance, and the proposed algorithms were able to solve instances with 190 jobs that were up to optimality.
However, some of the problem structures were under-utilized and unexplored due to the limited flexibility of the mathematical programming models.
For example, their ILP model could not exploit the result of \cite{aghe2019toustates} that the problem is polynomially solvable under the fixed order of the jobs.
However, perhaps more importantly, the way in which the SPACES algorithm determines the optimal switching behavior between every pair of intervals and states suggests that, although not apparent at first sight, the precomputed switching creates so-called \textit{spaces} that can be used to form bins for a kind-of bin packing problem.
Thus, the problem inherently consists of a packing problem that can be typically solved very efficiently in practice.

In this paper, we further explore the properties of the $\ProblemNotation$ problem.
To exploit them inside an optimization algorithm fully, instead of formulating a new mathematical programming model, we design a custom branch-and-bound algorithm utilizing techniques of \cite{aghe2019toustates,2020a:Benedikt} and new observations made in this work that gave us enough flexibility to reach $100\times$ speedup over the former state-of-the-art algorithm~\citep{2020a:Benedikt}.
Furthermore, the proposed techniques are not only tied to this specific problem; thus, we believe they can be applied to similar problems considering time-of-use scheduling.

\section{Branch-and-Bound Algorithm for \( \ProblemNotation \)}\label{sec:bab}

For the scheduling problem $\ProblemNotation$ defined in \cref{sec:probstat}, we develop a fast, exact branch-and-bound algorithm called \AbbrBAB{}. 
Generally, a Branch and Bound is an exact algorithm for solving combinatorial problems.    
It recursively partitions the solution space into a tree using specific branching rules.
By employing a lower bound, parts of the solution space with an objective value that is worse than the currently best value can be pruned and excluded from the search.

Before explaining details of \AbbrBAB{}, we discuss important properties of the problem required for designing the algorithm, namely the optimal TEC computation for a fixed jobs sequence presented in \cref{sec:fixseq_tec}.
Further, we rely on the technique of pre-processing the optimal state switching of \cite{2020a:Benedikt}, with its details briefly summarized in \cref{sec:spaces}.


To design an efficient branch-and-bound algorithm, multiple parts of the algorithm are important, especially (i) an efficient branching rule, (ii) a tight lower bound that can be obtained in a reasonable time, and (iii) a good initialization upper bound with primal heuristics to prune the tree as soon as possible.
The rest of the section explains the individual components of \AbbrBAB{}.

\subsection{Optimal TEC Computation for a Fixed Jobs Sequence}\label{sec:fixseq_tec}

In~\cite{aghe2019toustates}, the problem of computing the optimal TEC with machine states for a fixed sequence of jobs was shown to be polynomial.
The authors solve this problem by finding the shortest path in a \DefTerm{job-interval} graph that models both the cost of scheduling the jobs and the cost of transiting between the machine states.

In this work, instead of modeling the transitions between the machine states explicitly, we improve the approach proposed in~\cite{aghe2019toustates} by incorporating only the optimal switching costs found by solving the optimal switching problem by a technique called \AbbrIwa{}~\citep{2020a:Benedikt}.
This way, we obtain a job-interval graph with a size independent of the size of the transition graph.
Thus, the optimal TEC can be found in a shorter time, which is critical for its usage inside a branch-and-bound algorithm, where it needs to be evaluated in every search node.
See \cref{sec:spaces} for more details about the optimal switching problem and \AbbrIwa{} technique.


Let \( \FixSeq: \IntInterval{1}{\NumJobs} \rightarrow \IntInterval{1}{\NumJobs} \) be a bijective function representing a \DefTerm{fixed jobs sequence} on the machine, i.e., \( \FixSeq[\FixSeqPos] \) maps position \( \FixSeqPos \) in the sequence to job index \( \IdxJob \). 
Given fixed jobs sequence \( \FixSeq \), the set of intervals in which job on position \( \FixSeqPos \in \IntInterval{1}{\NumJobs} \) can be scheduled, is denoted as
\begin{equation}
    \SetProcIntervals[\FixSeqPos] = \{ \Interval{\IdxInterval} \in \SetProcIntervals : \texttt{\FirstOn} + \sum_{\IdxAnother{\FixSeqPos} = 1}^{\FixSeqPos - 1} \ProcTime{\FixSeq[\IdxAnother{\FixSeqPos}]} \le \IdxInterval \le \texttt{\LastOn} - \sum_{\IdxAnother{\FixSeqPos} = \FixSeqPos}^{\NumJobs} \ProcTime{\FixSeq[\IdxAnother{\FixSeqPos}]} + 1\}\,.
\end{equation}
where \texttt{\FirstOn} and \texttt{\LastOn} are indices of $\Interval{\FirstOn}$ and $\Interval{\LastOn}$, respectively.
The definition of $\SetProcIntervals[\FixSeqPos]$ takes into account early/late \( \StateProc \) intervals (to start up and turn off the machine) and the time required for scheduling the jobs on positions before/after \( \FixSeqPos \).

To find the optimal TEC for fixed jobs sequence \( \FixSeq \), let us define a job-interval graph $G^{\text{ji}}$ by a triplet \( G^{\text{ji}}=(\SetVertTec, \SetEdgesTec, \WeightTec) \), where \( \SetVertTec \) is the set of \DefTerm{vertices}, \( \SetEdgesTec \) is the set of \DefTerm{edges} and \( \WeightTec: \SetEdgesTec \rightarrow \SetIntNonNeg \) are the \DefTerm{weights} of the edges.
The set of the vertices and edges of the job-interval graph are defined as follows:
\begin{align}
    \SetVertTec & = \{ \VertTecSource \} \cup \{ \VertTec{\FixSeqPos}{\IdxInterval} : \FixSeqPos \in \IntInterval{1}{\NumJobs}, \Interval{\IdxInterval} \in \SetProcIntervals[\FixSeqPos] \} \cup \{ \VertTecSink \}, \\
    \begin{split}
        \SetEdgesTec & = \{ (\VertTecSource, \VertTec{1}{\IdxInterval}) : \Interval{\IdxInterval} \in \SetProcIntervals[1] \}  \\
        & \phantom{{}={}} \cup \{ (\VertTec{\FixSeqPos}{\IdxInterval}, \VertTec{\FixSeqPos+1}{\IdxAnother{\IdxInterval}}) : \FixSeqPos \in \IntInterval{1}{\NumJobs - 1}, \Interval{\IdxInterval} \in \SetProcIntervals[\FixSeqPos], \Interval{\IdxAnother{\IdxInterval}} \in \SetProcIntervals[\FixSeqPos + 1], \IdxInterval + \ProcTime{\FixSeq[\FixSeqPos]} \le \IdxAnother{\IdxInterval}  \}, \\
        & \phantom{{}={}} \cup \{ (\VertTec{\NumJobs}{\IdxInterval}, \VertTecSink) : \Interval{\IdxInterval} \in \SetProcIntervals[\NumJobs]  \}\,.
    \end{split}
\end{align}
Informally, each edge \( (\VertTec{\FixSeqPos}{\IdxInterval}, \VertTec{\FixSeqPos+1}{\IdxAnother{\IdxInterval}}) \) represents a scheduling of job $\Job{\FixSeq[\FixSeqPos]}$ at the beginning of interval $\Interval{\IdxInterval}$ and job $\Job{\FixSeq[\FixSeqPos + 1]}$ at beginning of interval $\Interval{\IdxAnother{\IdxInterval}}$.
Vertices \( \VertTecSource \) and \( \VertTecSink \) correspond to the first and the last \( \StateOff \) state of the machine, respectively.

The weights of the edges take both the jobs processing costs and the optimal switching costs into account, which is in contrast to the original idea presented in~\cite{aghe2019toustates}, where the costs of transitions between machine states are considered.
Different edge types have different weights:
\begin{itemize}
    \item edges from the first \( \StateOff \) state to the first job:
    \begin{equation}
        \WeightTec[\VertTecSource, \VertTec{1}{\IdxInterval}] = \EnergyCost{1} \cdot \TransPower[\StateOff, \StateOff] + \OptCostTrans[1, \IdxInterval]\,;
    \end{equation}

    \item edges between two consecutive jobs:
    \begin{equation}
        \WeightTec[\VertTec{\FixSeqPos}{\IdxInterval}, \VertTec{\FixSeqPos+1}{\IdxAnother{\IdxInterval}}] = \JobCost{\IdxJob}{\FixSeq[\FixSeqPos]} + \OptCostTrans[\IdxInterval + \ProcTime{\FixSeq[\NumJobs]} - 1, \IdxAnother{\IdxInterval}]\,,
    \end{equation}
    where
    \begin{equation}
        \JobCost{\IdxJob}{\IdxInterval} = \sum_{\IdxAnother{\IdxInterval} = \IdxInterval}^{\IdxInterval + \ProcTime{\IdxJob} - 1} \EnergyCost{\IdxAnother{\IdxInterval}} \cdot \TransPower[\StateProc, \StateProc]
    \end{equation}
    is the cost of scheduling job \( \Job{\IdxJob} \) at the beginning of interval \( \Interval{\IdxInterval} \);

    \item edges from the last job to the last \( \StateOff \) state:
    \begin{equation}
        \WeightTec[\VertTec{\NumJobs}{\IdxInterval}, \VertTecSink] = \JobCost{\FixSeq[\NumJobs]}{\IdxInterval} + \OptCostTrans[\IdxInterval + \ProcTime{\FixSeq[\NumJobs]} - 1, \NumIntervals] + \EnergyCost{\NumIntervals} \cdot \TransPower[\StateOff, \StateOff]\,.
    \end{equation}
\end{itemize}

To obtain the optimal TEC for the fixed jobs sequence, it suffices to find the cost of the shortest path from \( \VertTecSource \) to \( \VertTecSink \). 
For a given sequence $\FixSeq$, we denote the optimal TEC by $\TotalEnergyCost{\FixSeq}$.
Notice that the job-interval graph is topologically ordered according to the lexicographic ordering of the labels \( (\FixSeqPos, \IdxInterval) \) of the vertices.
Therefore, the shortest path can be found by processing the vertices along this topological ordering, which can be done in \( \AsymCompUpper{\NumIntervals^2 \cdot \NumJobs } \) time by using, e.g., Bellman-Ford algorithm for DAGs that runs in $\AsymCompUpper{|\SetVertTec| + |\SetEdgesTec|}$ time. 
The complexity of the pre-processing for obtaining the optimal switching costs by SPACES is $\AsymCompUpper{\NumIntervals^2 \cdot \NumStates \cdot (\NumStates + \log \NumIntervals +\log \NumStates)}$ (for non-negative energy costs $\EnergyCostVector$, see \cref{sec:spaces} for more details).
Hence, when added up, this represents an improvement over the job-interval graph proposed in~\cite{aghe2019toustates}, where finding the shortest path takes \( \AsymCompUpper{\NumIntervals^2 \cdot \sum_{\Job{\IdxJob} \in \SetJobs} \ProcTime{\IdxJob}} \), since here we avoid the pseudo-polynomial factor in terms of the processing times.
Also, note that since the graph is acyclic, we can still handle instances with negative energy costs as these are encapsulated inside weights $\WeightTec$ of the job-interval graph.


\begin{figure}
    \centering
    \input{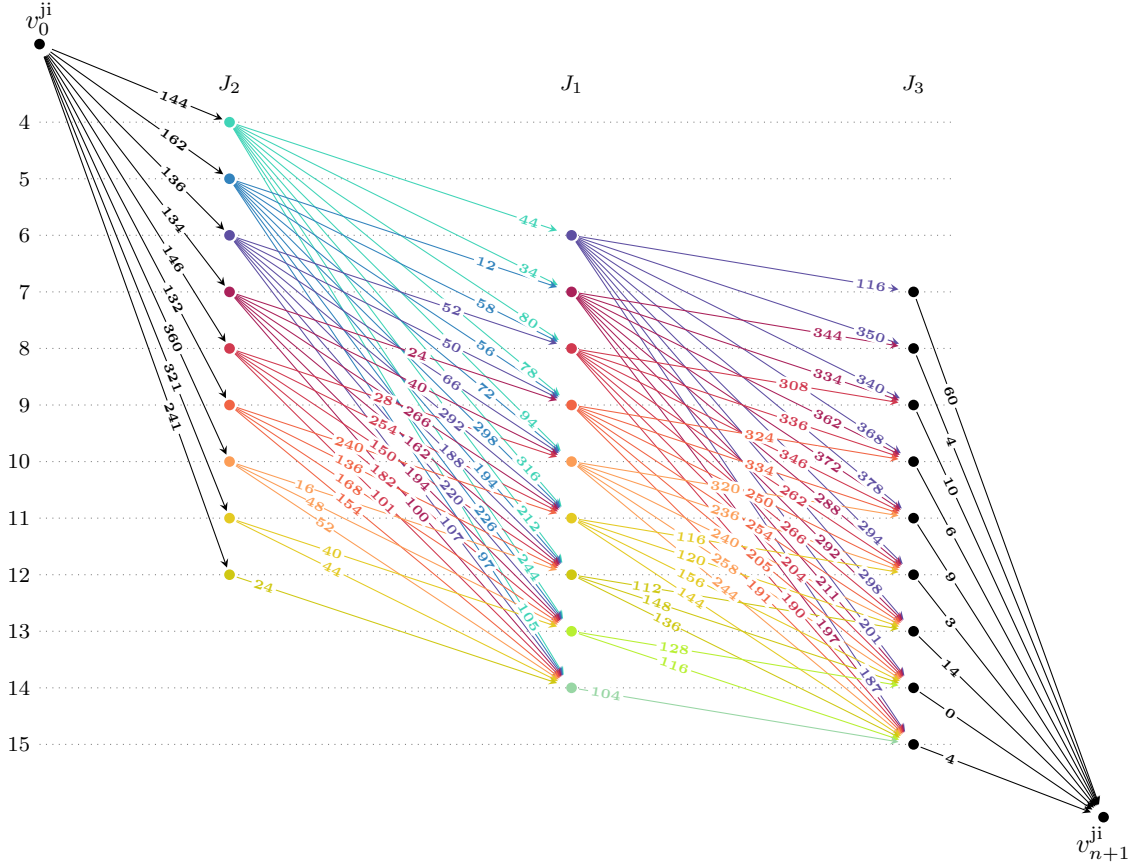}
    \caption{Example job-interval graph.}
    \label{fig:interval-jobs-graph}
\end{figure}

\subsection{The branching strategy and the lower bound}\label{sec:bab-lower-bound}


\AbbrBAB{} progressively constructs the jobs sequence \( \FixSeq \).
In each node of the search tree, \AbbrBAB{} keeps the current \DefTerm{partial job sequence} \( \FixSeqPartial : \IntInterval{1}{|\SetJobsPartial| } \rightarrow \SetJobsPartial \) for some jobs subset \( \SetJobsPartial \subseteq \SetJobs \).
The algorithm branches on the remaining un-fixed jobs \( \SetJobs \setminus \SetJobsPartial \), i.e., a new branch is created by appending one of the remaining jobs to the end of \( \FixSeqPartial \).
To break the symmetries arising from the jobs having equal processing times, only one branch per unique remaining processing time is created in each node.
To understand the branching, see an example of a branching subtree in \cref{fig:bab-basic} corresponding to the example in \cref{ex:example1} with three jobs $\Job{1}, \Job{2}, \Job{3}$ with processing times $\{1,2,4\}$, respectively.

The lower bound for each node of the branching tree is computed using the method for a fixed sequence utilizing the job-interval graph described in~\cref{sec:fixseq_tec}.
Let \( \FixSeqPartial : \IntInterval{1}{|\SetJobsPartial| } \rightarrow \SetJobsPartial \) be the partial job sequence in a search tree node for some jobs subset \( \SetJobsPartial \subseteq \SetJobs \).
To compute the lower bound, a new set of jobs \( \SetJobsLb \) is created by splitting the remaining un-fixed jobs into unit processing times jobs, i.e.,
\begin{equation} \label{eq:relaxed-jobs-lb}
    \SetJobsLb = \SetJobsPartial \cup \{ \JobPart{\IdxJob} : \IdxJob \in \IntInterval{1}{\sum_{\Job{\IdxAnother{\IdxJob}} \in \SetJobs \setminus \SetJobsPartial} \ProcTime{\IdxAnother{\IdxJob}}}  \}\,,
\end{equation}
where  \( \JobPart{\IdxJob} \) is a new job with unit processing time.
From \( \SetJobsLb \), a jobs sequence \( \FixSeqPartialLb \) is defined as follows
\begin{equation} \label{eq:fix-sequence-lb}
    \FixSeqPartialLb[\FixSeqPos]=
    \begin{dcases}
        \FixSeqPartial[\FixSeqPos] & \FixSeqPos \le |\SetJobsPartial|, \\
        \JobPart{\FixSeqPos - |\SetJobsPartial|} & |\SetJobsPartial| < \FixSeqPos  \le |\SetJobsLb|.
    \end{dcases}
\end{equation}
The lower bound $lb=\TotalEnergyCost{\FixSeqPartialLb}$ in the search tree node is then obtained by running the method from~\cref{sec:fixseq_tec} 
on jobs sequence \( \FixSeqPartialLb \).
See an example of the branching tree with computed lower bounds values in \cref{fig:bab-basic}.
The tree is traversed in a depth-first search, starting with the leftmost child.
At the leaves of the search tree, we obtain a feasible solution with the objective value equal to $ub$, representing an upper bound on the optimal objective value that is used to prune subtrees with the lower bound $lb\geq ub$.
The following proposition formally states the lower bound.


\begin{proposition}
Let \( \FixSeqPartial: \IntInterval{1}{|\SetJobsPartial| } \rightarrow \SetJobsPartial  \) be a partial sequence of jobs \( \SetJobsPartial \subseteq \SetJobs \), and \(\FixSeqPartialLb\) be the relaxed sequence defined by \eqref{eq:fix-sequence-lb}. Then, for any permutation $\FixSeq: \IntInterval{1}{|\SetJobs|} \rightarrow \SetJobs$ satisfying \(\FixSeq[\FixSeqPos] = \FixSeqPartial[\FixSeqPos], \forall \FixSeqPos \in \IntInterval{1}{|\SetJobsPartial|} \) it holds that \( \TotalEnergyCost{\FixSeq} \geq \TotalEnergyCost{\FixSeqPartialLb} \).
\end{proposition}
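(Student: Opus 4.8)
\section*{Proof proposal}

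The plan is to prove the statement in the equivalent form $\TotalEnergyCost{\FixSeqPartialLb} \le \TotalEnergyCost{\FixSeq}$ via a direct schedule-transformation argument. The guiding observation is that the objective \eqref{eq:tec} depends only on the machine-state vector $\SolVecTrans$ and not on which particular job occupies a given processing interval, because the per-interval processing cost $\EnergyCost{\IdxInterval}\cdot\TransPower[\StateProc,\StateProc]$ is the same for every job. Hence any feasible schedule for $\FixSeq$ can be re-labelled into a feasible schedule for $\FixSeqPartialLb$ that leaves $\SolVecTrans$ untouched and therefore has identical TEC; minimality of $\TotalEnergyCost{\FixSeqPartialLb}$ over all schedules respecting $\FixSeqPartialLb$ then yields the claim.

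Concretely, first I would fix an optimal solution $(\SolVecStarts,\SolVecTrans)$ attaining $\TotalEnergyCost{\FixSeq}$, i.e. a feasible schedule processing the jobs in the order prescribed by $\FixSeq$. Since $\FixSeq$ agrees with $\FixSeqPartial$ on the first $|\SetJobsPartial|$ positions, the jobs of $\SetJobsPartial$ occupy the earliest processing intervals and all of them complete before any job of $\SetJobs\setminus\SetJobsPartial$ begins. I would then construct a solution $(\IdxAnother{\SolVecStarts},\SolVecTrans)$ for the relaxed sequence $\FixSeqPartialLb$ that reuses the very same state vector $\SolVecTrans$: the fixed jobs keep their start times, while the $\StateProc$ intervals that were devoted to the un-fixed jobs are handed out, one interval at a time and in increasing order of interval index, to the unit-length jobs $\JobPart{1},\JobPart{2},\dots$ filling the tail positions $|\SetJobsPartial|+1,\dots,|\SetJobsLb|$ of $\FixSeqPartialLb$.

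The hard part---really the only point needing care---is verifying that this relabelling is a feasible schedule for $\FixSeqPartialLb$. Here I would check four things. The counting is exact: the number of unit jobs equals $\sum_{\Job{\IdxAnother{\IdxJob}}\in\SetJobs\setminus\SetJobsPartial}\ProcTime{\IdxAnother{\IdxJob}}$, which is precisely the number of $\StateProc$ intervals freed by deleting the un-fixed jobs, so the assignment is a bijection and each unit job lands in a distinct $\StateProc$ interval. The ordering constraint implicit in the job-interval graph (the job at position $\FixSeqPos+1$ starts only after the job at position $\FixSeqPos$ finishes) holds because the units are assigned to distinct intervals in increasing order, and all those intervals lie after the fixed jobs. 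The conditions that the machine processes at most one job per interval and that jobs run only in $\StateProc$ hold by construction, and the remaining state-transition feasibility conditions of \cref{sec:probstat} are inherited verbatim because $\SolVecTrans$ is left unchanged.

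Finally, since $(\IdxAnother{\SolVecStarts},\SolVecTrans)$ and $(\SolVecStarts,\SolVecTrans)$ share the identical state vector $\SolVecTrans$, formula \eqref{eq:tec} assigns them equal TEC, namely $\TotalEnergyCost{\FixSeq}$. As $\TotalEnergyCost{\FixSeqPartialLb}$ is by definition the minimum TEC over all feasible schedules respecting $\FixSeqPartialLb$ (obtained as the shortest path of \cref{sec:fixseq_tec}), we conclude $\TotalEnergyCost{\FixSeqPartialLb}\le\TotalEnergyCost{\FixSeq}$, which is the desired bound for every completion $\FixSeq$ of $\FixSeqPartial$.
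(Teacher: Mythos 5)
Your proposal is correct and is essentially the paper's argument: the paper proves the slightly stronger $\gcd$ version of this proposition using exactly the two ingredients you rely on, namely that splitting the un-fixed jobs into identical pieces relaxes the non-preemption condition, and that the interchangeability of these identical pieces lets the fixed-sequence shortest-path value $\TotalEnergyCost{\FixSeqPartialLb}$ coincide with the optimum of the relaxed problem. Your explicit schedule-relabelling (reusing the state vector $\SolVecTrans$, handing the freed $\StateProc$ intervals to the unit jobs in increasing interval order) is just a concrete, carefully checked rendering of that same relaxation-plus-interchangeability argument, specialized to the unit case.
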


Later in~\cref{sec:bab-lb-gcd}, we will prove a slightly stronger version of this proposition.

\begin{figure}
    \centering
    \begin{tikzpicture}
\coordinate (TreeLayerHeight) at (0cm,1.75cm);
\coordinate (TreeLeafDistance) at (2.5cm,0cm);
\newcommand{\crossNode}[1]{\node[gray] at (#1) {\xmark};}

\tikzset{BABNode/.style = {circle, fill=white, draw=black, inner sep=2pt, align=center, thick},
    labelNode/.style = {circle, fill=white, inner sep=1pt, outer sep=3pt, midway,font=\small,text=blue},
    lbNode/.style = {anchor=south east, xshift=-5pt, font=\scriptsize, fill=white, rectangle, inner sep=1pt, outer sep=1pt},
    ubNode/.style = {anchor=north, yshift=-5pt, font=\scriptsize},
    pruned/.style = {draw=gray!50!white, text=gray!50!white}}

\node[BABNode] (n1-1) at ($2.5*(TreeLeafDistance)-0.0*(TreeLayerHeight)$) {};
\node[BABNode] (n2-1) at ($0.5*(TreeLeafDistance)-1.0*(TreeLayerHeight)$) {};
\node[BABNode] (n2-2) at ($2.5*(TreeLeafDistance)-1.0*(TreeLayerHeight)$) {};
\node[BABNode] (n2-3) at ($4.5*(TreeLeafDistance)-1.0*(TreeLayerHeight)$) {};
\node[BABNode] (n3-1) at ($0.0*(TreeLeafDistance)-2.0*(TreeLayerHeight)$) {};
\node[BABNode] (n3-2) at ($1.0*(TreeLeafDistance)-2.0*(TreeLayerHeight)$) {};
\node[BABNode] (n3-3) at ($2.0*(TreeLeafDistance)-2.0*(TreeLayerHeight)$) {};
\node[BABNode] (n3-4) at ($3.0*(TreeLeafDistance)-2.0*(TreeLayerHeight)$) {};
\node[BABNode, pruned] (n3-5) at ($4.0*(TreeLeafDistance)-2.0*(TreeLayerHeight)$) {}; 
\node[BABNode, pruned] (n3-6) at ($5.0*(TreeLeafDistance)-2.0*(TreeLayerHeight)$) {}; 
\node[BABNode] (n4-1) at ($0.0*(TreeLeafDistance)-3.0*(TreeLayerHeight)$) {};
\node[BABNode, pruned] (n4-2) at ($1.0*(TreeLeafDistance)-3.0*(TreeLayerHeight)$) {}; 
\node[BABNode] (n4-3) at ($2.0*(TreeLeafDistance)-3.0*(TreeLayerHeight)$) {};
\node[BABNode, pruned] (n4-4) at ($3.0*(TreeLeafDistance)-3.0*(TreeLayerHeight)$) {}; 
\node[BABNode, pruned] (n4-5) at ($4.0*(TreeLeafDistance)-3.0*(TreeLayerHeight)$) {}; 
\node[BABNode, pruned] (n4-6) at ($5.0*(TreeLeafDistance)-3.0*(TreeLayerHeight)$) {}; 

\path (n1-1) edge node[labelNode] {1} (n2-1) 
             edge node[labelNode] {2} (n2-2)
             edge node[labelNode] {4} (n2-3);
             
\path (n2-1) edge node[labelNode] {2} (n3-1) 
             edge node[labelNode] {4} (n3-2);
             
\path (n2-2) edge node[labelNode] {1} (n3-3) 
             edge node[labelNode] {4} (n3-4);
            
\path (n2-3) edge[pruned] node[labelNode] {1} (n3-5) 
             edge[pruned] node[labelNode] {2} (n3-6); 

             
\path (n3-1) edge node[labelNode] {4} (n4-1);
\path (n3-2) edge[pruned] node[labelNode] {2} (n4-2);
\path (n3-3) edge node[labelNode] {4} (n4-3);
\path (n3-4) edge[pruned] node[labelNode] {1} (n4-4);
\path (n3-5) edge[pruned] node[labelNode] {2} (n4-5);
\path (n3-6) edge[pruned] node[labelNode] {1} (n4-6);

\node[lbNode] at (n1-1) {$lb=339$};
\node[lbNode] at (n2-1) {$lb=339$};
\node[lbNode] at (n3-1) {$lb=339$};
\node[lbNode] at (n4-1) {$lb=353$};
\node[ubNode] at (n4-1) {\uline{$ub=353$} $(\Job{1},\Job{2},\Job{3})$};

\node[lbNode] at (n3-2) {$lb=364$};

\node[lbNode] at (n2-2) {$lb=339$};
\node[lbNode] at (n3-3) {$lb=339$};
\node[lbNode] at (n4-3) {$lb=342$};
\node[ubNode] at (n4-3) {\uline{$ub=342$} $(\Job{2},\Job{1},\Job{3})$};

\node[lbNode] at (n3-4) {$lb=342$};

\node[lbNode] at (n2-3) {$lb=360$};

\end{tikzpicture}
    \caption{Branch-and-bound tree with lower bound $\FixSeqPartialLb$. The numbers on the edges correspond to the processing time of the job selected during the branching.}
    \label{fig:bab-basic}
\end{figure}

\paragraph{Implementation remarks}
 The computation of the lower bound between the nodes can be performed efficiently using only a two-dimensional array of the costs.
The rows of the array are called \DefTerm{levels}, and the array contains \( \sum_{\Job{\IdxJob} \in \SetJobs} \ProcTime{\IdxJob} \) levels.
The number of columns corresponds to \( |\SetProcIntervals| \).
Each element of the array can be seen as a corresponding vertex from the set \( \SetVertTec \) of the job-interval graph, where each job has a processing time of 1.

\begin{sloppypar}
When a job \( \Job{\IdxJob} \in \SetJobs \setminus \SetJobsPartial \) is appended to the end of the \( \FixSeqPartial \) in a search tree node, the levels \hbox{\( \IntInterval{\sum_{\Job{\IdxAnother{\IdxJob}} \in \SetJobsPartial} \ProcTime{\IdxAnother{\IdxJob}}}{\ProcTime{\IdxJob} + \sum_{\Job{\IdxAnother{\IdxJob}} \in \SetJobsPartial} \ProcTime{\IdxAnother{\IdxJob}}}\)} are \DefTerm{joined} together, i.e., as if the edges in the job-interval graph between these levels were removed.
On the other hand, on backtracking from a node, the levels are \DefTerm{split}, i.e., as if the edges in the job-interval graph between these levels are included again.
For the better utilization of the data structures, i.e., no copies of the memory-intensive objects, the \AbbrBAB{} is implemented as a depth-first search.
\end{sloppypar}

\begin{figure}
    \centering
    \begin{tikzpicture}
\coordinate (TreeLayerHeight) at (0cm,1.75cm);
\coordinate (TreeLeafDistance) at (2.5cm,0cm);
\newcommand{\crossNode}[1]{\node[gray] at (#1) {\xmark};}
\newcommand{\nodeLblGcdLb}[3]{\node[lbNode] (#1D) at (#1) {\begin{tabular}{rl}gcd & \;{=} #2\\  $lb$ & \;{=} #3\end{tabular}}; }
\newcommand{\nodeLblGcdLbR}[3]{\node[lbNodeR] at (#1) {\begin{tabular}{rl}gcd & \;{=} #2\\  $lb$ & \;{=} #3\end{tabular}}; }

\tikzset{BABNode/.style = {circle, fill=white, draw=black, inner sep=2pt, align=center, thick},
    labelNode/.style = {circle, fill=white, inner sep=1pt, outer sep=3pt, midway,font=\small,text=blue},
    lbNode/.style = {anchor=south east, xshift=-5pt, font=\scriptsize, fill=white, rectangle, inner sep=0pt, outer sep=0pt, align=left},
    lbNodeR/.style = {anchor=south west, xshift=3pt, font=\scriptsize, fill=white, rectangle, inner sep=0pt, outer sep=0pt, align=left},
    ubNode/.style = {anchor=north, yshift=-5pt, font=\scriptsize},
    pruned/.style = {draw=gray!50!white, text=gray!50!white}}

\node[BABNode] (n1-1) at ($2.5*(TreeLeafDistance)-0.0*(TreeLayerHeight)$) {};
\node[BABNode] (n2-1) at ($0.5*(TreeLeafDistance)-1.0*(TreeLayerHeight)$) {};
\node[BABNode] (n2-2) at ($2.5*(TreeLeafDistance)-1.0*(TreeLayerHeight)$) {};
\node[BABNode] (n2-3) at ($4.5*(TreeLeafDistance)-1.0*(TreeLayerHeight)$) {};
\node[BABNode] (n3-1) at ($0.0*(TreeLeafDistance)-2.0*(TreeLayerHeight)$) {};
\node[BABNode, pruned] (n3-2) at ($1.0*(TreeLeafDistance)-2.0*(TreeLayerHeight)$) {};
\node[BABNode] (n3-3) at ($2.0*(TreeLeafDistance)-2.0*(TreeLayerHeight)$) {};
\node[BABNode] (n3-4) at ($3.0*(TreeLeafDistance)-2.0*(TreeLayerHeight)$) {};
\node[BABNode, pruned] (n3-5) at ($4.0*(TreeLeafDistance)-2.0*(TreeLayerHeight)$) {}; 
\node[BABNode, pruned] (n3-6) at ($5.0*(TreeLeafDistance)-2.0*(TreeLayerHeight)$) {}; 
\node[BABNode] (n4-1) at ($0.0*(TreeLeafDistance)-3.0*(TreeLayerHeight)$) {};
\node[BABNode, pruned] (n4-2) at ($1.0*(TreeLeafDistance)-3.0*(TreeLayerHeight)$) {}; 
\node[BABNode] (n4-3) at ($2.0*(TreeLeafDistance)-3.0*(TreeLayerHeight)$) {};
\node[BABNode, pruned] (n4-4) at ($3.0*(TreeLeafDistance)-3.0*(TreeLayerHeight)$) {}; 
\node[BABNode, pruned] (n4-5) at ($4.0*(TreeLeafDistance)-3.0*(TreeLayerHeight)$) {}; 
\node[BABNode, pruned] (n4-6) at ($5.0*(TreeLeafDistance)-3.0*(TreeLayerHeight)$) {}; 

\path (n1-1) edge node[labelNode] {1} (n2-1) 
             edge node[labelNode] {2} (n2-2)
             edge node[labelNode] {4} (n2-3);
             
\path (n2-1) edge node[labelNode] {2} (n3-1) 
             edge[pruned] node[labelNode] {4} (n3-2);
             
\path (n2-2) edge node[labelNode] {1} (n3-3) 
             edge node[labelNode] {4} (n3-4);
            
\path (n2-3) edge[pruned] node[labelNode] {1} (n3-5) 
             edge[pruned] node[labelNode] {2} (n3-6); 

             
\path (n3-1) edge node[labelNode] {4} (n4-1);
\path (n3-2) edge[pruned] node[labelNode] {2} (n4-2);
\path (n3-3) edge node[labelNode] {4} (n4-3);
\path (n3-4) edge[pruned] node[labelNode] {1} (n4-4);
\path (n3-5) edge[pruned] node[labelNode] {2} (n4-5);
\path (n3-6) edge[pruned] node[labelNode] {1} (n4-6);

{
\setlength{\tabcolsep}{0pt}
\renewcommand{\arraystretch}{0.75}

\nodeLblGcdLb{n1-1}{1}{339}
\nodeLblGcdLb{n2-1}{2}{353}
\nodeLblGcdLb{n3-1}{4}{353}
\nodeLblGcdLb{n4-1}{$\emptyset$}{353}
\node[ubNode] at (n4-1) {\uline{$ub=353$} $(\Job{1},\Job{2},\Job{3})$};

\nodeLblGcdLb{n2-2}{1}{339}
\nodeLblGcdLb{n3-3}{4}{342}
\nodeLblGcdLb{n4-3}{$\emptyset$}{342}
\node[ubNode] at (n4-3) {\uline{$ub=342$} $(\Job{2},\Job{1},\Job{3})$};

\nodeLblGcdLbR{n3-4}{1}{342}
\nodeLblGcdLbR{n2-3}{1}{360}

\draw[very thick, Clr2] ($(n2-1.south east) + (0.2,-0.2)$) rectangle ($(n2-1D.north west) + (-0.2,0.2)$);

\draw[very thick, Clr2] ($(n3-3.south east) + (0.2,-0.2)$) rectangle ($(n3-3D.north west) + (-0.2,0.2)$);
}

\end{tikzpicture}
    \caption{Branch-and-bound with lower bound $\widehat{\pi}^{\text{lb}}_{\gcd}$ improved with $\gcd$ of the unfixed jobs. Bounds that were improved with respect to \cref{fig:bab-basic} are highlighted in a rectangle.}
    \label{fig:bab-gcd}
\end{figure}

\subsection{Tightening the Lower Bound for Non-coprime Processing Times} \label{sec:bab-lb-gcd}
The lower bound introduced in Section~\ref{sec:bab-lower-bound} relies on the relaxation of the non-preemptive jobs into a set of jobs with unit processing times.
Since these jobs are interchangeable, they can be scheduled optimally using the job-interval graph under an arbitrary permutation.
The idea for improving this lower bound builds on the observation that particular sets of un-fixed jobs $\SetJobs \setminus \SetJobsPartial$ can be relaxed into jobs with processing times greater than one, thus improving the relaxation. 
We say that a set of positive integers is \DefTerm{non-coprime} if their greatest common divisor is greater than 1.
In this section, we explain how a tighter lower bound for jobs having non-coprime processing times can be obtained.

The idea of obtaining a tighter lower bound is similar to the one of $\TotalEnergyCost{\FixSeqPartialLb}$.
Let \( \SetJobs \setminus \SetJobsPartial \) be a set of un-fixed jobs in some node of the search tree.
Further, let 
$$\gcd(\SetJobs \setminus \SetJobsPartial) = \gcd(\{p_{j^\prime} :  J_{j^\prime} \in \SetJobs \setminus \SetJobsPartial \})$$
be the greatest common divisor of processing times of all un-fixed jobs.
We define a set of jobs 
\begin{equation} \label{eq:relaxed-jobs-gcd-lb}
    \mathcal{J}^\text{lb}_{\text{gcd}} = \SetJobsPartial \cup \{ \JobGCD{\IdxJob} : \IdxJob \in \IntInterval{1}{\gcd(\SetJobs \setminus \SetJobsPartial)^{-1}\cdot\sum_{\Job{\IdxAnother{\IdxJob}} \in \SetJobs \setminus \SetJobsPartial} \ProcTime{\IdxAnother{\IdxJob}}}  \}\,,
\end{equation}
where  \( \JobGCD{\IdxJob} \) is a new job with processing time equal to $\gcd(\SetJobs \setminus \SetJobsPartial)$.
Finally, the sequence \( \widehat{\pi}^{\text{lb}}_{\gcd} \) of jobs \(  \mathcal{J}^\text{lb}_{\text{gcd}} \) is defined similarly as \eqref{eq:fix-sequence-lb}, that is
\begin{equation} \label{eq:fix-sequence-gcd-lb}
    \widehat{\pi}^{\text{lb}}_{\gcd}=
    \begin{dcases}
        \FixSeqPartial[\FixSeqPos] & \FixSeqPos \le |\SetJobsPartial|, \\
        \JobGCD{\FixSeqPos - |\SetJobsPartial|} & |\SetJobsPartial| < \FixSeqPos  \le |\mathcal{J}^\text{lb}_{\text{gcd}}|.
    \end{dcases}
\end{equation}
Finally, a lower bound on TEC with respect to the partial solution $\SetJobsPartial$ is obtained by solving the problem with the set of jobs $\mathcal{J}^\text{lb}_{\text{gcd}}$ under sequence $\widehat{\pi}^{\text{lb}}_{\gcd}$ on job-interval graph by a procedure given in~\cref{sec:fixseq_tec}.

\begin{proposition}
Let \( \FixSeqPartial: \IntInterval{1}{|\SetJobsPartial| } \rightarrow \SetJobsPartial  \) be a partial sequence of jobs \( \SetJobsPartial \subseteq \SetJobs \), and $\FixSeqPartialLb$,  $\widehat{\pi}^{\text{lb}}_{\gcd}$ be the relaxed sequences defined by \eqref{eq:fix-sequence-lb} and \eqref{eq:fix-sequence-gcd-lb}. 
Then, for any permutation $\FixSeq: \IntInterval{1}{|\SetJobs|} \rightarrow \SetJobs$ satisfying \(\FixSeq[\FixSeqPos] = \FixSeqPartial[\FixSeqPos], \forall \FixSeqPos \in \IntInterval{1}{|\SetJobsPartial|} \) it holds that $\TotalEnergyCost{\FixSeq} \overset{(a)}{\geq} \TotalEnergyCost{\widehat{\pi}^{\text{lb}}_{\gcd}} \overset{(b)}{\geq} \TotalEnergyCost{\FixSeqPartialLb}$ with inequality~(b) being strict for some inputs.
\end{proposition}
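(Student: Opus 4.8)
The plan is to derive both inequalities from a single \emph{refinement} principle: whenever the tail jobs of one sequence are obtained by splitting the tail jobs of another into smaller pieces, every feasible schedule of the coarser sequence embeds into a feasible schedule of the finer one with \emph{identical} TEC, so the optimal TEC of the finer sequence can only be smaller or equal. Two observations set up the argument. First, all three sequences $\FixSeq$, $\widehat{\pi}^{\text{lb}}_{\gcd}$ and $\FixSeqPartialLb$ share the common prefix $\FixSeqPartial$ and the \emph{same} total tail workload $\sum_{\Job{\IdxAnother{\IdxJob}} \in \SetJobs \setminus \SetJobsPartial} \ProcTime{\IdxAnother{\IdxJob}}$; they differ only in how that tail is partitioned. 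Second, writing $g = \gcd(\SetJobs \setminus \SetJobsPartial)$, every tail processing time $\ProcTime{\IdxAnother{\IdxJob}}$ is an integer multiple $k_{\IdxAnother{\IdxJob}}\cdot g$ of $g$, and $g$ is itself a multiple of $1$; hence the chain $\FixSeq \to \widehat{\pi}^{\text{lb}}_{\gcd} \to \FixSeqPartialLb$ is a succession of ever-finer splittings.

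For inequality~(a) I would start from an optimal schedule $S^{\star}$ attaining $\TotalEnergyCost{\FixSeq}$ and build from it a schedule feasible for $\widehat{\pi}^{\text{lb}}_{\gcd}$ of equal cost. Each tail job $\Job{\IdxAnother{\IdxJob}}$ occupying the contiguous block $a, a+1, \dots, a + \ProcTime{\IdxAnother{\IdxJob}} - 1$ in $S^{\star}$ is replaced by $k_{\IdxAnother{\IdxJob}}$ jobs of processing time $g$ placed back-to-back in exactly the same intervals, starting at $a, a+g, \dots, a + (k_{\IdxAnother{\IdxJob}} - 1)g$. Because the per-interval processing power is constant, the job costs $\JobCost{\IdxJob}{\IdxInterval}$ are additive and the total processing cost is unchanged; because consecutive pieces are contiguous, the only newly introduced switching terms have the form $\OptCostTrans[t, t+1]$, which vanish (see below). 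Ordering all pieces by start time yields a schedule consistent with $\widehat{\pi}^{\text{lb}}_{\gcd}$: the pieces are interchangeable and pairwise non-overlapping, so the job-interval-graph ordering constraint $\IdxInterval + g \le \IdxAnother{\IdxInterval}$ is respected. Since $\TotalEnergyCost{\widehat{\pi}^{\text{lb}}_{\gcd}}$ is the minimum over all such schedules, $\TotalEnergyCost{\widehat{\pi}^{\text{lb}}_{\gcd}} \le \mathrm{TEC}(S^{\star}) = \TotalEnergyCost{\FixSeq}$.

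Inequality~(b) follows by the identical construction one level finer, splitting each size-$g$ job of an optimal schedule for $\widehat{\pi}^{\text{lb}}_{\gcd}$ into $g$ contiguous unit jobs; the processing cost is again preserved and the inter-piece terms $\OptCostTrans[t, t+1]$ again vanish, giving $\TotalEnergyCost{\FixSeqPartialLb} \le \TotalEnergyCost{\widehat{\pi}^{\text{lb}}_{\gcd}}$. For the strictness claim in~(b) it suffices to exhibit one instance, and the running example of \cref{ex:example1} already does so: at the node with remaining jobs of processing times $\{2,4\}$ (so $g = 2$) the size-$g$ relaxation yields $lb = 353$ whereas the unit relaxation yields $lb = 339$ (compare \cref{fig:bab-gcd} with \cref{fig:bab-basic}). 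The gap is explained by the fact that unit jobs may be scattered into the cheapest individual intervals, whereas size-$g$ jobs must occupy $g$ consecutive intervals.

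The main obstacle, and the only nontrivial step, is justifying that splitting introduces no spurious cost, i.e. that the optimal switching cost between two contiguous processing intervals is zero, $\OptCostTrans[t, t+1] = 0$. Here I would appeal to the definition of the optimal switching problem solved by \AbbrIwa{} in \cref{sec:spaces}: when no interval lies strictly between the end of one processing block and the start of the next, the cheapest admissible state trajectory is simply to remain in $\StateProc$, contributing nothing beyond the per-interval processing power already booked inside $\JobCost{\IdxJob}{\IdxInterval}$. I would also note that the source/sink (start-up and shut-down) edges and all prefix edges are identical across the three sequences, because the common prefix $\FixSeqPartial$ and the overall span of occupied intervals are unchanged under back-to-back splitting; thus only the tail's internal switching terms are affected, and those are exactly the ones shown to vanish.
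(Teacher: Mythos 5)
Your proposal is correct and takes essentially the same route as the paper: both view $\widehat{\pi}^{\text{lb}}_{\gcd}$ (resp.\ $\FixSeqPartialLb$) as a preemption-style relaxation obtained by splitting the unfixed jobs into identical, interchangeable pieces, so that the fixed-sequence shortest-path value is the true optimum of the relaxed problem and hence a valid lower bound. Your explicit splitting construction and the check that $\OptCostTrans[t, t+1] = 0$ simply make rigorous what the paper's proof states informally, and your strictness witness (the $\gcd = 2$ node with $lb = 353$ versus $lb = 339$, compare \cref{fig:bab-gcd,fig:bab-basic}) is the same one the paper cites.
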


\begin{proof}
The inequality $(a)$ follows from the observation that $\mathcal{J}^\text{lb}_{\text{gcd}}$ represents the problem that relaxes on the condition of non-preemptive jobs.
To show $\TotalEnergyCost{\widehat{\pi}^{\text{lb}}_{\gcd}}$ is indeed a lower bound on $\TotalEnergyCost{\FixSeq}$, one needs to ensure that sequence $\widehat{\pi}^{\text{lb}}_{\gcd}$ is the optimal solution of the relaxed problem, that is whether
$$\widehat{\pi}^{\text{lb}}_{\gcd} \in \arg \min_{\pi^\prime\in\Pi(  \mathcal{J}^\text{lb}_{\text{gcd}})} \TotalEnergyCost{\pi^\prime},$$
where $\Pi(\mathcal{J}^\text{lb}_{\text{gcd}})$ is the set of all permutations of jobs $\mathcal{J}^\text{lb}_{\text{gcd}}$. Focusing on the unfixed part of the jobs $\mathcal{J}^\text{lb}_{\text{gcd}}\setminus \SetJobsPartial$, one can see that all jobs $\JobGCD{\IdxJob}$ are identical with processing time equal to $\gcd(\SetJobs \setminus \SetJobsPartial)$.
Thus, all unfixed jobs are completely interchangeable. 
Therefore, we can employ the method from \cref{sec:fixseq_tec} applied to a job-interval graph with a fixed sequence to obtain the optimal solution of the relaxed problem.

Similarly, for inequality~$(b)$, sequence $\FixSeqPartialLb$ can be seen as an optimal solution of a relaxed problem with $\mathcal{J}^\text{lb}_{\text{gcd}}$ into unit-processing time jobs $\JobPart{\IdxJob}$ and the same arguments as above apply.
To see why the inequality~$(b)$ is strict for some instances (namely the ones with short sequences of intervals with low costs) when $\gcd > 1$, see, e.g., example branching trees in~\cref{fig:bab-basic} and \cref{fig:bab-gcd}.
An illustrative example would consider an instance where $\gcd=2$, with one interval $\Interval{\IdxInterval}$ having cost significantly smaller than its neighborhood intervals, i.e., $\min \{c_{i-1}, c_{i+1}\} \gg c_i$.
The unit processing time lower bound $\FixSeqPartialLb$ would exploit cheap interval $\Interval{\IdxInterval}$, whereas $\widehat{\pi}^{\text{lb}}_{\gcd}$ avoids it, leading to a stronger bound.
    


\end{proof}



\subsection{Bin-packing Primal Heuristic}\label{sec:binpack_primal_heur}


In order to find a good upper bound $ub$ as soon as possible, so-called \DefTerm{primal heuristics} are often used during the search. 
These heuristics try to reconstruct a feasible solution early in the search tree (possibly in each node of the tree), hoping to further prune the branches that do not contain any better solution.
In \AbbrBAB{}, we introduce a similar idea based on the observation that the solution to the relaxed problem can be, in some cases, used to recover a solution for the original problem that does not preempt the jobs.
In such cases, the cost of the recovered solution matches the lower bound value, efficiently pruning the search subtree.

In each node of the search tree, we first compute a lower bound for either jobs sequence given by \( \FixSeqPartialLb \) or $\widehat{\pi}^{\text{lb}}_{\gcd}$ using the method described in~\cref{sec:bab-lower-bound}.
From the solution of the relaxed instance, we extract the maximal sequences of consecutive intervals during which the machine operates in the \( \StateProc \) state.
Let us denote these sequences by \( (\ProcSequence[1], \ProcSequence[2], \dots, \ProcSequence[k],\ldots, \ProcSequence[n(\FixSeqPartialLb)]) \), where \( \ProcSequence[\IdxProcSequence] \subseteq \SetProcIntervals \) and \( n(\FixSeqPartialLb) \) 
is the number of these maximal sequences in the optimal solution of the relaxed problem for \( \FixSeqPartialLb \) (or $\widehat{\pi}^{\text{lb}}_{\gcd}$). 
Now, we may formulate the following \( \ProblemBinPack \) bin-packing problem:

\begin{sloppypar}
\paragraph{\( \ProblemBinPack \) problem} Given the maximal processing intervals sequences \( ( \ProcSequence[1], \dots, \ProcSequence[k],\ldots, \ProcSequence[n(\FixSeqPartialLb)] ) \) and given the jobs  \(\SetJobs \), find a partition of \( \SetJobs \) into disjoint sets \( \SetJobs[1] \), \( \ldots, \SetJobs[k]\), \dots, \(\SetJobs[n(\FixSeqPartialLb)] \) such that
\begin{equation}
    \forall \IdxProcSequence \in \IntInterval{1}{n(\FixSeqPartialLb)}: \sum_{\Job{\IdxJob} \in \SetJobs[\IdxProcSequence]} \ProcTime{\IdxJob} \leq |\ProcSequence[\IdxProcSequence]|\,,
\end{equation}
where $|\ProcSequence[\IdxProcSequence]|$ is the length of $\IdxProcSequence$-th sequence of consecutive processing intervals.
\end{sloppypar}

The \( \ProblemBinPack \) problem resembles the decision version of a bin-packing problem with unequal bin sizes, which is known to be $\textsf{NP}$-complete.
If the \( \ProblemBinPack \) problem instance is solved in a node of the branch and bound tree and is feasible, we obtain a feasible solution to \( \ProblemNotation \) with the objective value $ub$.
The $ub$ value is the total energy cost of that solution and is equal to the lower bound \( \TotalEnergyCost{\FixSeqPartialLb} \).
Therefore, the whole sub-tree rooted in the current node can be pruned since the lower bound may only increase within the sub-tree.
On the other hand, if the \( \ProblemBinPack \) problem is not solved, the \AbbrBAB{} continues branching.

\begin{Example}{3}
    Consider the node highlighted by a green rectangle in the branch and bound tree in \cref{fig:bab-packing}.
    When computing a lower bound for the unfixed job with original processing times \{1,4\} relaxed into five unit-sized jobs $\JobPart{\IdxJob}$ since $\gcd{\{1,4\}}=1$. 
    The shortest path on the job-interval graph from \cref{sec:fixseq_tec} leads to a solution considering the optimal switching behavior
    \begin{equation*}
        \begin{split}
            \OptPathTrans[1,20] = &\left((\StateOff, \StateOff), (\StateOff, \StateOff), (\StateOff, \StateOff), (\StateOff,\StateProc), (\StateProc, \StateProc),(\StateProc, \StateProc), (\StateProc,\StateOff),\right.\\
            &\left.(\StateOff,\StateProc),(\StateProc, \StateProc), (\StateProc,\StateProc), (\StateProc,\StateIdle), (\StateIdle,\StateProc), (\StateProc,\StateOff), (\StateOff,\StateOff)\right)
        \end{split}
    \end{equation*} 
    with the cost of 339.
    This induces an instance of $\ProblemBinPack$ problem with $( \ProcSequence[1], \ProcSequence[2], \ProcSequence[3])$ having $|\ProcSequence[1]|=3$, $|\ProcSequence[2]|=3$ and $|\ProcSequence[3]|=1$.
    Such an instance is infeasible. Thus, branching continues with the sequence $(\Job{2},\Job{1})$, where $\gcd$ of unfixed jobs is 4.
    Computing the lower bound at this node, i.e., the optimal solution for sequence $\widehat{\pi}^{\text{lb}}_{\gcd}$, yields a relaxed solution with blocks $|\ProcSequence[1]|=2$ and $|\ProcSequence[2]|=5$ (identical to the one shown in \cref{fig:example-schedule}), which can be used to pack all jobs with processing times $\{1, 2, 4\}$, producing a feasible solution with $c^{TEC}$ equal to $ub=342$, matching a lower bound value $\TotalEnergyCost{\widehat{\pi}^{\text{lb}}_{\gcd}}$.   
\end{Example}

\paragraph{Implementation remarks} The packing problem \( \ProblemBinPack \) can be tackled by any standard approach, including the heuristics 
or the exact algorithms. 
In our case, we solve \( \ProblemBinPack \) using IBM CP Optimizer with a single \( \CmdCpPack \) global constraint.
Since the bin-packing problem is $\textsf{NP}$-complete in general, the time limit has to be set for the solver to avoid stalling, although in practice, such problems are solved quickly.



\begin{figure}
    \centering
    \begin{tikzpicture}
\coordinate (TreeLayerHeight) at (0cm,1.75cm);
\coordinate (TreeLeafDistance) at (2.5cm,0cm);
\newcommand{\crossNode}[1]{\node[gray] at (#1) {\xmark};}
\newcommand{\nodeLblGcdLbPack}[4]{\node[lbNode] at (#1) {\begin{tabular}{rl}gcd & \;{=} #2\\  $lb$ & \;{=} #3\\\multicolumn{2}{c}{blocks: #4}\end{tabular}}; }

\newcommand{\nodeLblGcdLbPackU}[4]{\node[lbNode] at (#1) {\begin{tabular}{rl}gcd & \;{=} #2\\  $lb$ & \;{=} #3\\\multicolumn{2}{c}{\uline{blocks: #4}}\end{tabular}}; }
\newcommand{\nodeLblGcdLbPackR}[4]{\node[lbNodeR] at (#1) {\begin{tabular}{rl}gcd & \;{=} #2\\  $lb$ & \;{=} #3\\\multicolumn{2}{c}{blocks: #4}\end{tabular}}; }

\newcommand{\nodeLblGcdLbR}[3]{\node[lbNodeR] at (#1) {\begin{tabular}{rl}gcd & \;{=} #2\\  $lb$ & \;{=} #3\end{tabular}}; }

\tikzset{BABNode/.style = {circle, fill=white, draw=black, inner sep=2pt, align=center, thick},
    labelNode/.style = {circle, fill=white, inner sep=1pt, outer sep=3pt, midway,font=\small,text=blue},
    lbNode/.style = {anchor=south east, xshift=-5pt, font=\scriptsize, fill=white, rectangle, inner sep=0pt, outer sep=0pt, align=left},
    lbNodeR/.style = {anchor=south west, xshift=3pt, font=\scriptsize, fill=white, rectangle, inner sep=0pt, outer sep=0pt, align=left},
    ubNode/.style = {anchor=north, yshift=-5pt, font=\scriptsize, fill=white, inner sep=1pt},
    pruned/.style = {draw=gray!50!white, text=gray!50!white}}

\node[BABNode] (n1-1) at ($2.5*(TreeLeafDistance)-0.0*(TreeLayerHeight)$) {};
\node[BABNode] (n2-1) at ($0.5*(TreeLeafDistance)-1.0*(TreeLayerHeight)$) {};
\node[BABNode] (n2-2) at ($2.5*(TreeLeafDistance)-1.0*(TreeLayerHeight)$) {};
\node[BABNode] (n2-3) at ($4.5*(TreeLeafDistance)-1.0*(TreeLayerHeight)$) {};
\node[BABNode, pruned] (n3-1) at ($0.0*(TreeLeafDistance)-2.0*(TreeLayerHeight)$) {};
\node[BABNode, pruned] (n3-2) at ($1.0*(TreeLeafDistance)-2.0*(TreeLayerHeight)$) {};
\node[BABNode] (n3-3) at ($2.0*(TreeLeafDistance)-2.0*(TreeLayerHeight)$) {};
\node[BABNode] (n3-4) at ($3.0*(TreeLeafDistance)-2.0*(TreeLayerHeight)$) {};
\node[BABNode, pruned] (n3-5) at ($4.0*(TreeLeafDistance)-2.0*(TreeLayerHeight)$) {}; 
\node[BABNode, pruned] (n3-6) at ($5.0*(TreeLeafDistance)-2.0*(TreeLayerHeight)$) {}; 
\node[BABNode, pruned] (n4-1) at ($0.0*(TreeLeafDistance)-3.0*(TreeLayerHeight)$) {};
\node[BABNode, pruned] (n4-2) at ($1.0*(TreeLeafDistance)-3.0*(TreeLayerHeight)$) {}; 
\node[BABNode, pruned] (n4-3) at ($2.0*(TreeLeafDistance)-3.0*(TreeLayerHeight)$) {};
\node[BABNode, pruned] (n4-4) at ($3.0*(TreeLeafDistance)-3.0*(TreeLayerHeight)$) {}; 
\node[BABNode, pruned] (n4-5) at ($4.0*(TreeLeafDistance)-3.0*(TreeLayerHeight)$) {}; 
\node[BABNode, pruned] (n4-6) at ($5.0*(TreeLeafDistance)-3.0*(TreeLayerHeight)$) {}; 

\path (n1-1) edge node[labelNode] {1} (n2-1) 
             edge node[labelNode] {2} (n2-2)
             edge node[labelNode] {4} (n2-3);
             
\path (n2-1) edge[pruned] node[labelNode] {2} (n3-1) 
             edge[pruned] node[labelNode] {4} (n3-2);
             
\path (n2-2) edge node[labelNode] {1} (n3-3) 
             edge node[labelNode] {4} (n3-4);
            
\path (n2-3) edge[pruned] node[labelNode] {1} (n3-5) 
             edge[pruned] node[labelNode] {2} (n3-6); 

             
\path (n3-1) edge[pruned] node[labelNode] {4} (n4-1);
\path (n3-2) edge[pruned] node[labelNode] {2} (n4-2);
\path (n3-3) edge[pruned] node[labelNode] {4} (n4-3);
\path (n3-4) edge[pruned] node[labelNode] {1} (n4-4);
\path (n3-5) edge[pruned] node[labelNode] {2} (n4-5);
\path (n3-6) edge[pruned] node[labelNode] {1} (n4-6);

{
\setlength{\tabcolsep}{0pt}
\renewcommand{\arraystretch}{0.75}

\nodeLblGcdLbPack{n1-1}{1}{339}{3, 3, 1}
\nodeLblGcdLbPackU{n2-1}{2}{353}{2, 4}

\node[ubNode] at (n2-1) {\uline{$ub=353$}};

\nodeLblGcdLbPack{n2-2}{1}{339}{3, 3, 1}
\nodeLblGcdLbPackU{n3-3}{4}{342}{2, 5}
\node[ubNode] at (n3-3) {\uline{$ub=342$}};

\nodeLblGcdLbR{n3-4}{1}{342}

\nodeLblGcdLbR{n2-3}{1}{360}

\draw[very thick, Clr2] ($(n2-2.south east) + (0.25,-0.15)$) rectangle ($(n2-2D.north west) + (-0.6,0.4)$);

}

\end{tikzpicture}
    \caption{Branch-and-bound tree with the bin-packing primal heuristic. Feasible solutions are found earlier in the search tree.}
    \label{fig:bab-packing}
\end{figure}


\subsection{Initial Heuristic based on the Blocks Found in the Root}\label{sec:bin_finding}
\AbbrBAB{} algorithm can initialized with an arbitrary feasible solution, allowing it to prune unpromising branches earlier in the tree.
We propose a heuristic algorithm that provides such a feasible solution based on the root node relaxation performed by the lower bound calculation described in \cref{sec:bab-lb-gcd}.

The heuristic algorithm works on the following premise.
Consider that the relaxation of the problem is solved in the root node, i.e., a lower bound sequence $\FixSeqPartialLb$ is computed.
At each node, we attempt to solve the $\ProblemBinPack$ problem, aiming to pack the jobs within the set of consecutive processing intervals $( \ProcSequence[1], \ProcSequence[2], \dots, \ProcSequence[n(\FixSeqPartialLb)] )$ that were computed for the relaxed solution $\FixSeqPartialLb$.
If such a solution is found, then its objective value matches $\TotalEnergyCost{\FixSeqPartialLb}$, and the problem is already solved at the root node.
If this is not the case, then the set of intervals $( \ProcSequence[1], \ProcSequence[2], \dots, \ProcSequence[n(\FixSeqPartialLb)] )$ cannot pack the jobs $\SetJobs$ and thus represents an infeasible instance of the $\ProblemBinPack$ problem.
Therefore, one could attempt to enlarge some of the processing intervals to accommodate all the jobs $\SetJobs$, hoping that if the enlargement is done in a sensitive way, then the resulting packing should lead to a solution of a similar quality as the optimal solution found for the relaxed problem.
We treat the enlargement of the original bins $( \ProcSequence[1], \ProcSequence[2], \dots, \ProcSequence[n(\FixSeqPartialLb)] )$ as an optimization problem that we call the \textit{Bin-Finding problem}:

\begin{sloppypar}
\paragraph{\( \ProblemBinFind \) problem} Given block sizes $b_1, \dots, b_k$ solve the following ILP problem:
\begin{align}
    & \min z \label{eq:binfind_start} \\
    & \text{subject to} \notag \\
    & z \geq b_i - s_i, \ \forall i \in \{1, \dots, k \} \\
    & z \geq s_i - b_i, \ \forall i \in \{1, \dots, k \} \\
    & s_{i} = \sum\limits_{\IdxJob \in \SetJobs} x_{i,\IdxJob} \cdot \ProcTime{\IdxJob}, \ \forall i \in \{1, \dots, k \} \\
    & \sum\limits_{i = 1}^{k} x_{i,\IdxJob} = 1, \ \forall \IdxJob \in \SetJobs{}\\
    & \text{where} \notag \\
    & z \in \mathbb{R}, \\
    & s_i \in \mathbb{R}_{\geq 0} \ \forall i \in \{1, \dots, k \},\\
    & x_{i,j}\in\{0,1\} \ \forall i \in \{1, \dots, k \}, \forall j \in \SetJobs. \label{eq:binfind_end}
\end{align}

\end{sloppypar}

The model assigns the jobs to blocks such that the maximal difference between the suggested block size $b_i$ and enlarged block size $s_i$ found is minimized.
Variable $s_i$ represents the new size of the block $i$, considering all jobs $\Job{\IdxJob}$ with $x_{i,j}=1$.  
To compute an initial upper bound for the \AbbrBAB{}, we use lengths of processing intervals $|\ProcSequence[1]|, \dots, |\ProcSequence[n(\FixSeqPartialLb)]|$ as block sizes $b_1, \ldots, b_{n(\FixSeqPartialLb)}$ obtained from the relaxed solution $\FixSeqPartialLb$ computed in the root node.
This way, we obtain new `aggregated jobs', which greatly reduce the size of the problem that can be then solved efficiently by finding an optimal schedule of \emph{aggregated jobs} with processing times $s_1, \ldots, s_{n(\FixSeqPartialLb)}$ using the job-interval graph from \cref{sec:fixseq_tec} under the sequence $(1, \ldots, n(\FixSeqPartialLb))$.




\section{Experiments}\label{sec:experiments}
In this section, we perform numerical experiments to evaluate the efficiency of the proposed \AbbrBAB{} introduced above.
Specifically, in \cref{sec:experiment_sota}, we compare \AbbrBAB{} with the
ILP model of~\cite{2018:Aghelinejad} that does not exploit optimal state switching described in \cref{sec:spaces} and the former state-of-the-art ILP model introduced in~\cite{2020a:Benedikt}.
Furthermore, in \cref{sec:experiments-real-prices}, we design larger and more complex instances with historical energy price profiles, and we investigate the effect of the different groups of the processing times.

The experiments were conducted on Intel Xeon Silver 4110 CPU 2.10 GHz processor with 100 GB of RAM. 
The construction of the ILP models is done in Python 3.7, and the models are solved using the Gurobi solver.
The algorithm for optimal state switching is written in C\# while \AbbrBAB{} algorithm is written in C++. 
Please note that only the construction of the ILP model is performed in Python, whose runtime is negligible, while the actual solution of the model is carried out using the C++ code of the Gurobi solver.
Furthermore, the C\# implementation of the optimal state switching problem uses a Just-in-Time (JIT) compiler, so its performance is close to the native code.


\setlength{\tabcolsep}{5pt}



\subsection{Comparison to the state-of-the-art}\label{sec:experiment_sota}
In this experiment, we compare our \AbbrBAB{} algorithm with two former state-of-the-art ILP models.
The first model in this comparison is \AbbrIlpRef{} proposed by \cite{2018:Aghelinejad}, which is the former state-of-the-art model that does not use pre-processing techniques described in \cref{sec:spaces}.
Later, an ILP model utilizing the interval-state graph for pre-processing, denoted as \AbbrIlpOur{}, was coined by~\cite{2020a:Benedikt} and currently represents the state-of-the-art solution method.

\begin{table}[ht]
    \centering
    \caption{Comparison of upper bound \AbbrObjective{}, lower bound \AbbrLb{} and runtime \AbbrTime{} between the algorithms.}

\renewrobustcmd{\bfseries}{\fontseries{b}\selectfont}
\fontsize{8}{9}\selectfont   

\bgroup
\def\arraystretch{0.75}

\begin{tabular}{
    S[table-format=3.0]
    S[table-format=4.0] @{\hskip \TabColSep}
    S[table-format=5.0]
    S[table-format=5.0]
    S[table-format=4.0] @{\hskip \TabColSep}
    S[table-format=5.0]
    S[table-format=5.0]
    S[table-format=4.0] @{\hskip \TabColSep}
    S[table-format=4.0]
    S[table-format=4.0]
    S[table-format=1.1] @{\hskip \TabColSep}
    S[table-format=2.1]}
    \multicolumn{12}{c}{Machine state transition diagram: \DataSingleOff{}} \\
    \toprule
    \multicolumn{2}{c @{\hskip \TabColSep}}{Instance} & 
    \multicolumn{3}{c @{\hskip \TabColSep}}{\AbbrIlpRef{}~\citep{2018:Aghelinejad}} &
    \multicolumn{3}{c @{\hskip \TabColSep}}{\AbbrIlpOur{}~\citep{2020a:Benedikt}} &
    \multicolumn{3}{c @{\hskip \TabColSep}}{\AbbrBAB{}} &
    \multicolumn{1}{c}{\AbbrPreProc{}}  \\
    \cmidrule(lr{\TabColSep}){1-2} \cmidrule(lr{\TabColSep}){3-5} \cmidrule(lr{\TabColSep}){6-8} \cmidrule(lr{\TabColSep}){9-11} \cmidrule{12-12}
    \multicolumn{1}{c}{\scriptsize $\NumJobs$} &
    \multicolumn{1}{c @{\hskip \TabColSep}}{\scriptsize $\NumIntervals$} &
    \multicolumn{1}{c}{\scriptsize \AbbrObjective{} [-]} & \multicolumn{1}{c}{\scriptsize \AbbrLb{} [-]} & \multicolumn{1}{c @{\hskip \TabColSep}}{\scriptsize \AbbrTime{} [\si{\second}]} &
    \multicolumn{1}{c}{\scriptsize \AbbrObjective{} [-]} & \multicolumn{1}{c}{\scriptsize \AbbrLb{} [-]} & \multicolumn{1}{c @{\hskip \TabColSep}}{\scriptsize \AbbrTime{} [\si{\second}]} &
    \multicolumn{1}{c}{\scriptsize \AbbrObjective{} [-]} & \multicolumn{1}{c}{\scriptsize \AbbrLb{} [-]} & \multicolumn{1}{c @{\hskip \TabColSep}}{\scriptsize \AbbrTime{} [\si{\second}]} &
    \multicolumn{1}{c}{\scriptsize \AbbrTime{} [\si{\second}]} \\
    \midrule
    150 & 527 & \Optimum{ 8582} &  8567 & \AbbrTimeLimit{} & \Optimum{ 8582} & \Optimum{ 8582} &  188 & \Optimum{ 8582} & \Optimum{ 8582} & 0.6 & 1.0 \\
150 & 647 &  8726 &  8247 & \AbbrTimeLimit{} & \Optimum{ 8409} & \Optimum{ 8409} &  279 & \Optimum{ 8409} & \Optimum{ 8409} & 0.9 & 2.9 \\
150 & 767 &  8557 &  7787 & \AbbrTimeLimit{} & \Optimum{ 8132} & \Optimum{ 8132} &  619 & \Optimum{ 8132} & \Optimum{ 8132} & 0.8 & 5.5 \\
150 & 888 &  8976 &  6780 & \AbbrTimeLimit{} & \Optimum{ 8078} & \Optimum{ 8078} &  521 & \Optimum{ 8078} & \Optimum{ 8078} & 1.1 & 9.1 \\
170 & 650 & 10596 &  9628 & \AbbrTimeLimit{} & \Optimum{10068} & \Optimum{10068} &  289 & \Optimum{10068} & \Optimum{10068} & 0.7 & 2.3 \\
170 & 799 & 10794 &  8832 & \AbbrTimeLimit{} & \Optimum{ 9820} & \Optimum{ 9820} &  939 & \Optimum{ 9820} & \Optimum{ 9820} & 0.8 & 4.6 \\
170 & 948 & 10940 &  8343 & \AbbrTimeLimit{} & \Optimum{ 9637} & \Optimum{ 9637} &  822 & \Optimum{ 9637} & \Optimum{ 9637} & 1.7 & 9.3 \\
170 & 1097 & 11189 &  8124 & \AbbrTimeLimit{} & \Optimum{ 9620} & \Optimum{ 9620} & 1281 & \Optimum{ 9620} & \Optimum{ 9620} & 2.4 & 13.4 \\
190 & 757 & 12555 & 11206 & \AbbrTimeLimit{} & \Optimum{12008} & \Optimum{12008} &  243 & \Optimum{12008} & \Optimum{12008} & 0.7 & 3.9 \\
190 & 930 & 12882 & 10521 & \AbbrTimeLimit{} & \Optimum{11758} & \Optimum{11758} &  951 & \Optimum{11758} & \Optimum{11758} & 1.1 & 6.9 \\
190 & 1104 & 12791 &  9949 & \AbbrTimeLimit{} & \Optimum{11611} & \Optimum{11611} & 3095 & \Optimum{11611} & \Optimum{11611} & 2.4 & 13.3 \\
190 & 1277 & 12757 &     0 & \AbbrTimeLimit{} & \Optimum{11465} & \Optimum{11465} & 1319 & \Optimum{11465} & \Optimum{11465} & 2.0 & 22.7 \\
\midrule
\multicolumn{4}{l}{Average time [\si{\second}]:} & {\textgreater 3600} & & &  879 & & & 1.3 & 7.9 \\
\multicolumn{4}{l}{Average optimality gap [\si{\percent}]:} & {7.58} & & & {0.00} & & & {0.00} \\
\bottomrule
    \\
    \multicolumn{12}{c}{Machine state transition diagram: \DataMultiSby{}} \\
    \toprule
    \multicolumn{2}{c @{\hskip \TabColSep}}{Instance} & 
    \multicolumn{3}{c @{\hskip \TabColSep}}{\AbbrIlpRef{}~\citep{2018:Aghelinejad}} &
    \multicolumn{3}{c @{\hskip \TabColSep}}{\AbbrIlpOur{}~~\citep{2020a:Benedikt}} &
    \multicolumn{3}{c @{\hskip \TabColSep}}{\AbbrBAB{}} &
    \multicolumn{1}{c}{\AbbrPreProc{}}  \\
    \cmidrule(lr{\TabColSep}){1-2} \cmidrule(lr{\TabColSep}){3-5} \cmidrule(lr{\TabColSep}){6-8} \cmidrule(lr{\TabColSep}){9-11} \cmidrule{12-12}
    \multicolumn{1}{c}{\scriptsize $\NumJobs$} &
    \multicolumn{1}{c @{\hskip \TabColSep}}{\scriptsize $\NumIntervals$} &
    \multicolumn{1}{c}{\scriptsize \AbbrObjective{} [-]} & \multicolumn{1}{c}{\scriptsize \AbbrLb{} [-]} & \multicolumn{1}{c @{\hskip \TabColSep}}{\scriptsize \AbbrTime{} [\si{\second}]} &
    \multicolumn{1}{c}{\scriptsize \AbbrObjective{} [-]} & \multicolumn{1}{c}{\scriptsize \AbbrLb{} [-]} & \multicolumn{1}{c @{\hskip \TabColSep}}{\scriptsize \AbbrTime{} [\si{\second}]} &
    \multicolumn{1}{c}{\scriptsize \AbbrObjective{} [-]} & \multicolumn{1}{c}{\scriptsize \AbbrLb{} [-]} & \multicolumn{1}{c @{\hskip \TabColSep}}{\scriptsize \AbbrTime{} [\si{\second}]} &
    \multicolumn{1}{c}{\scriptsize \AbbrTime{} [\si{\second}]} \\
    \midrule
    150 & 529 & \Optimum{21910} & 21562 & \AbbrTimeLimit{} & \Optimum{21910} & \Optimum{21910} &  133 & \Optimum{21910} & \Optimum{21910} & 0.9 & 1.1 \\
150 & 649 & 29425 & 20685 & \AbbrTimeLimit{} & \Optimum{21821} & \Optimum{21821} &  705 & \Optimum{21821} & \Optimum{21821} & 1.2 & 3.1 \\
150 & 769 & 37764 & 18140 & \AbbrTimeLimit{} & \Optimum{21353} & \Optimum{21353} &  957 & \Optimum{21353} & \Optimum{21353} & 1.3 & 5.2 \\
150 & 890 & 43929 & 16799 & \AbbrTimeLimit{} & \Optimum{21266} & \Optimum{21266} &  718 & \Optimum{21266} & \Optimum{21266} & 1.4 & 8.5 \\
170 & 651 & 27862 & 25015 & \AbbrTimeLimit{} & \Optimum{25807} & \Optimum{25807} &  811 & \Optimum{25807} & \Optimum{25807} & 0.8 & 2.6 \\
170 & 799 & 39095 & 21981 & \AbbrTimeLimit{} & \Optimum{25518} & \Optimum{25518} & 1250 & \Optimum{25518} & \Optimum{25518} & 1.1 & 5.0 \\
170 & 948 & 46083 & 20709 & \AbbrTimeLimit{} & \Optimum{25279} & \Optimum{25279} & 2925 & \Optimum{25279} & \Optimum{25279} & 2.2 & 8.5 \\
170 & 1096 & 53177 & 20091 & \AbbrTimeLimit{} & \Optimum{25279} & \Optimum{25279} & 2088 & \Optimum{25279} & \Optimum{25279} & 2.0 & 14.1 \\
190 & 756 & 38471 & 27984 & \AbbrTimeLimit{} & \Optimum{30563} & \Optimum{30563} &  808 & \Optimum{30563} & \Optimum{30563} & 0.9 & 4.2 \\
190 & 929 & 46319 & 26166 & \AbbrTimeLimit{} & \Optimum{30224} & \Optimum{30224} & 1086 & \Optimum{30224} & \Optimum{30224} & 2.3 & 7.5 \\
190 & 1102 & 53751 & 24630 & \AbbrTimeLimit{} & \Optimum{30224} & \Optimum{30224} & 2050 & \Optimum{30224} & \Optimum{30224} & 1.9 & 13.6 \\
190 & 1275 & 61547 &     0 & \AbbrTimeLimit{} & \Optimum{30071} & \Optimum{30071} & 2490 & \Optimum{30071} & \Optimum{30071} & 4.5 & 23.7 \\
\midrule
\multicolumn{4}{l}{Average time [\si{\second}]:} & {\textgreater 3600} & & & 1335 & & & 1.7 & 8.1 \\
\multicolumn{4}{l}{Average optimality gap [\si{\percent}]:} & {34.23} & & & {0.00} & & & {0.00} \\
\bottomrule
    \end{tabular}

\egroup

    \label{tab:exp1-large}
\end{table}
For the comparison, we used instances with two different machine state diagrams.
First, the NOSBY (i.e., no stand-by mode, depicted in \cref{fig:example-func-power-time}) instances were proposed by \cite{2014:Shrouf} and correspond to the state diagrams without a standby mode.
Second, more complex instances with two different stand-by modes are denoted as TWOSBY, containing a total of 5 states~\cite[p.~12]{2020a:Benedikt}.
For each type of machine state diagram, we generate instances with $n\in\{150,170,190\}$ jobs, as this is roughly the frontier of the limit of efficient computations of ILP models \AbbrIlpRef{}~\citep{2018:Aghelinejad} and \AbbrIlpOur{}~\citep{2020a:Benedikt}.
The scheme for the generation of the dataset follows standard benchmarks~\citep{2014:Shrouf,2018:Aghelinejad}, i.e., processing times being drawn from the discrete uniform distribution $\ProcTime{\IdxJob}\sim\mathcal{U}[1,5]$ and the energy costs in each interval follows $c_i\sim\mathcal{U}[1,10]$ distribution.
Note that even though processing times $\ProcTime{\IdxJob}$ may seem very small, they, in fact, represent the number of intervals that are typically 15 minutes long, thus resembling a realistic benchmark. 
The number of intervals $\SetIntervals = \{\Interval{1}, \Interval{2}, \dots, \Interval{\NumIntervals}\}$ in each instance is given as $\lambda\cdot\left(T(\StateOff,\StateProc)+\sum_{\IdxJob\in\SetJobs} \ProcTime{\IdxJob}+T(\StateProc,\StateOff)\right)$ where the term inside the brackets represents a lower bound on horizon length $\NumIntervals$ (measured in the number of equidistant intervals) and $\lambda\in\{1.3,1.6,1.9,2.2\}$ is a scaling factor to inflate the scheduling horizon and to provide a room for optimization. 

The results are reported in \cref{tab:exp1-large}.
Each row corresponds to one problem instance. 
In the first column \textit{Instance}, we report the specific parameters of the particular instance, and the following three columns account for the state-of-the-art methods included in the comparison.
The last column \AbbrPreProc{} denotes the time spent in the pre-processing step involving the shortest path algorithm on the interval-state graph, as described in \cref{sec:spaces}.
This time, although negligible, needs to be included in the runtimes of algorithms \AbbrIlpOur{} and \AbbrBAB{} as these two utilize the pre-processing step.
The time limit for one instance was set to \Second{3600}, \AbbrTimeLimit{} stands for the time limit reached. Optimal bound values are denoted in bold.

The results clearly demonstrate the power of the pre-processing step---for the NOSBY state transition diagram, the model \AbbrIlpRef{}~\citep{2018:Aghelinejad} cannot solve the considered instances up to optimality, reaching an average proved optimality gap 7.58\%, whereas the algorithms \AbbrIlpOur{} and \AbbrBAB{} utilizing the pre-processing (\AbbrPreProc{}) solve all instances up to optimality.
More significantly, the number of states $|\SetStates|$ in the machine state transition diagram heavily affects the performance of \AbbrIlpRef{}.
For example, in instances TWOSBY, the average optimality gap of \AbbrIlpRef{} increases to 34.23\%, while the other algorithms are still able to solve all instances up to optimality within the time limit.
However, it can be seen that compiling all possible state transitions into the optimal switching problem slightly increases the complexity of the resulting problem instance, reflected in increased runtime of \AbbrIlpOur{} wheres \AbbrBAB{} is largely unaffected.

Comparing the runtimes, it can be seen that our proposed algorithm \AbbrBAB{} clearly improves on the result of \AbbrIlpOur{} about a factor of 10-100$\times$.
This demonstrates the efficiency of utilizing the bin-packing structure of the problem, heavily exploited by \AbbrBAB{}.
However, we note that the benchmark results reveal that, in fact, the existing strategy for dataset generation does not provide instances that are challenging enough since all of the instances were solved in less than \Second{10} by \AbbrBAB{}.
Therefore, in the following sections, we focus on understanding which combination of instance parameters makes the problem difficult to solve, and we explore the empirical complexity of the instances utilizing real-life energy prices.


\subsection{Processing time groups with real energy costs}\label{sec:experiments-real-prices}
In this section, we study the effect of different groups of jobs' processing times on the computational times required by \AbbrIlpOur{} model and \AbbrBAB{} algorithm.
In \cref{sec:experiment_sota}, we worked with the standard benchmark instances that assume the set of possible processing times is~$\{1,2,3,4,5\}$~\citep{2014:Shrouf}.
Here, we also test different groups of processing times, for example, all odd $\{2,4,6,8,10\}$, two primes $\{3,7\}$ or primes with unit job $\{1,2,3,5,7\}$.
All tested processing time groups can be seen in \cref{tab:exp2-aggregation}.
The state diagram of the resource used is TWOSBY (i.e., two stand-by states, five states in total, see description in \cref{sec:experiment_sota}).
In addition, we also use real energy (electricity) prices extracted from the independent market operator (OTE) in the Czech Republic.
A part of such profile that was used is shown in \cref{fig:example-profile}.

\begin{figure}
    \centering
    \pgfplotstableread[col sep=comma]{images_sources/real_energy_costs_example.csv}\datatable
    \begin{tikzpicture}
        \begin{axis}[
            width=0.75\textwidth,
            height=0.25\textwidth,
            ybar, 
            label style={font=\small},
            tick label style={font=\scriptsize},
            xtick={0,23},
            xticklabels={Fri 12AM, Sat 12AM},
            ylabel={cost $\EnergyCost{\IdxInterval}$ [CZK]},
            xlabel={interval $\IdxInterval$ [-]},
            ymajorgrids,
            bar width=3pt,
            enlarge x limits={abs=0.75cm}, 
        ]
        \addplot table[x=idx, y=cost] {\datatable};
        \end{axis}
    \end{tikzpicture}
    \caption{Example of a 48-hour part of the energy profile with real electricity costs.}
    \label{fig:example-profile}
\end{figure}
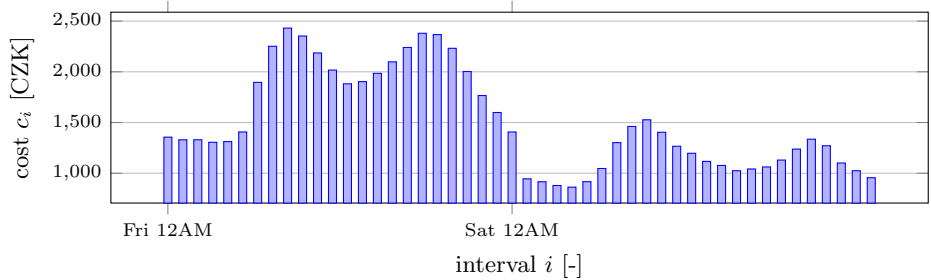


\begin{table}[ht]
    \centering
    \caption{Comparison between  \AbbrIlpOur{} and \AbbrBAB{} with respect to different processing times groups and numbers of jobs; \AbbrTime{} denotes the average runtime, \#o is the number of optimally solved instances, \#s is the number of instances for which a solution was found but was not proven optimal, and gap is the optimality gap for instances counted in \#s column; \AbbrPreProc{} denotes the pre-processing used for both compared methods.}
    \begin{adjustbox}{max width=\textwidth}

\renewrobustcmd{\bfseries}{\fontseries{b}\selectfont}
\fontsize{8}{9}\selectfont   

\bgroup
\def\arraystretch{0.75}

\begin{tabular}{
    S[table-format=3.0]
    l @{\hskip \TabColSep}
    S[table-format=3.2]
    S[table-format=2.0]
    S[table-format=2.0]
    S[table-format=1.3] @{\hskip \TabColSep}
    S[table-format=3.2]
    S[table-format=2.0]
    S[table-format=2.0]
    S[table-format=1.3] @{\hskip \TabColSep}
    S[table-format=2.1]}
    \toprule
     \multicolumn{2}{c @{\hskip \TabColSep}}{Instance} & 
    \multicolumn{4}{c @{\hskip \TabColSep}}{\AbbrIlpOur{}~\citep{2020a:Benedikt}}  & \multicolumn{4}{c @{\hskip \TabColSep}}{\AbbrBAB{}} &
    \multicolumn{1}{c}{\AbbrPreProc{}} \\
    \cmidrule(lr{\TabColSep}){1-2} \cmidrule(lr{\TabColSep}){3-6} \cmidrule(lr{\TabColSep}){7-10} \cmidrule{11-11}
    \multicolumn{1}{c}{\scriptsize $\NumJobs$} &
    \multicolumn{1}{c @{\hskip \TabColSep}}{\scriptsize processing times} &
    \multicolumn{1}{c}{\scriptsize \AbbrTime{} [\si{\second}]} & \multicolumn{1}{c}{\scriptsize \#o [-]} & \multicolumn{1}{c}{\scriptsize \#s [-]} & \multicolumn{1}{c @{\hskip \TabColSep}}{\scriptsize gap [\%]} &
    \multicolumn{1}{c}{\scriptsize \AbbrTime{} [\si{\second}]} & \multicolumn{1}{c}{\scriptsize \#o [-]} & \multicolumn{1}{c}{\scriptsize \#s [-]} & \multicolumn{1}{c @{\hskip \TabColSep}}{\scriptsize gap [\%]} &
    \multicolumn{1}{c}{\scriptsize \AbbrTime{} [\si{\second}]} \\
    \midrule
    

    50 & $\{1, 2, 3, 4, 5, 6, 7, 8, 9, 10\}$ & 79.13 & 20 & 0 & \textendash & 0.21 & 20 & 0 & \textendash & 0.4 \\
& $\{1, 2, 3, 5, 7\}$ & 21.05 & 20 & 0 & \textendash & 0.18 & 20 & 0 & \textendash & 0.1 \\
& $\{2, 4, 6, 8, 10\}$ & 52.72 & 20 & 0 & \textendash & 0.21 & 20 & 0 & \textendash & 0.5 \\
& $\{2, 4\}$ & 6.31 & 20 & 0 & \textendash & 0.18 & 20 & 0 & \textendash & 0.1 \\
& $\{3, 5, 6, 7\}$ & 46.00 & 20 & 0 & \textendash & 0.24 & 20 & 0 & \textendash & 0.4 \\
& $\{3, 7\}$ & 46.17 & 20 & 0 & \textendash & 0.24 & 20 & 0 & \textendash & 0.3 \\
& $\{8, 10\}$ & 77.35 & 20 & 0 & \textendash & 0.33 & 20 & 0 & \textendash & 1.6 \\
100 & $\{1, 2, 3, 4, 5, 6, 7, 8, 9, 10\}$ & 444.72 & 11 & 9 & 0.16 & 0.48 & 20 & 0 & \textendash & 3.0 \\
& $\{1, 2, 3, 5, 7\}$ & 134.42 & 20 & 0 & \textendash & 0.29 & 20 & 0 & \textendash & 0.8 \\
& $\{2, 4, 6, 8, 10\}$ & 315.68 & 17 & 2 & 0.01 & 0.48 & 20 & 0 & \textendash & 3.7 \\
& $\{2, 4\}$ & 136.14 & 20 & 0 & \textendash & 0.24 & 20 & 0 & \textendash & 0.5 \\
& $\{3, 5, 6, 7\}$ & 471.05 & 9 & 11 & 0.19 & 0.44 & 20 & 0 & \textendash & 2.9 \\
& $\{3, 7\}$ & 483.35 & 7 & 13 & 0.16 & 0.40 & 20 & 0 & \textendash & 2.5 \\
& $\{8, 10\}$ & 525.80 & 5 & 15 & 0.07 & 0.89 & 20 & 0 & \textendash & 10.6 \\
150 & $\{1, 2, 3, 4, 5, 6, 7, 8, 9, 10\}$ & 543.10 & 4 & 11 & 1.23 & 0.76 & 20 & 0 & \textendash & 7.9 \\
& $\{1, 2, 3, 5, 7\}$ & 466.84 & 9 & 11 & 0.10 & 0.43 & 20 & 0 & \textendash & 3.1 \\
& $\{2, 4, 6, 8, 10\}$ & 514.14 & 8 & 12 & 0.29 & 0.91 & 20 & 0 & \textendash & 10.4 \\
& $\{2, 4\}$ & 278.49 & 14 & 6 & 0.03 & 0.40 & 20 & 0 & \textendash & 1.8 \\
& $\{3, 5, 6, 7\}$ & 539.66 & 4 & 11 & 0.85 & 0.73 & 20 & 0 & \textendash & 6.8 \\
& $\{3, 7\}$ & 581.41 & 2 & 18 & 0.29 & 0.69 & 20 & 0 & \textendash & 6.1 \\
& $\{8, 10\}$ & \AbbrTimeLimit{} & 0 & 0 & \textendash & 143.74 & 15 & 5 & 0.02 & 31.8 \\
200 & $\{1, 2, 3, 4, 5, 6, 7, 8, 9, 10\}$ & \AbbrTimeLimit{} & 0 & 7 & 7.66 & 1.30 & 20 & 0 & \textendash & 21.1 \\
& $\{1, 2, 3, 5, 7\}$ & 565.26 & 2 & 18 & 0.61 & 0.61 & 20 & 0 & \textendash & 5.3 \\
& $\{2, 4, 6, 8, 10\}$ & 577.29 & 1 & 3 & 9.39 & 1.46 & 20 & 0 & \textendash & 25.2 \\
& $\{2, 4\}$ & 425.80 & 12 & 8 & 0.15 & 0.52 & 20 & 0 & \textendash & 3.7 \\
& $\{3, 5, 6, 7\}$ & 576.54 & 1 & 15 & 2.74 & 1.17 & 20 & 0 & \textendash & 19.7 \\
& $\{3, 7\}$ & \AbbrTimeLimit{} & 0 & 19 & 1.32 & 1.09 & 20 & 0 & \textendash & 17.7 \\
& $\{8, 10\}$ & \AbbrTimeLimit{} & 0 & 0 & \textendash & 132.99 & 15 & 5 & 0.02 & 78.7 \\    
    \bottomrule
    \end{tabular}
\egroup

\end{adjustbox}

    \label{tab:exp2-aggregation}
\end{table}

The aggregated results for all instances with a fixed number of jobs $n$ and processing time groups can be seen in \cref{tab:exp2-aggregation}.
First, note that ILP model \AbbrIlpOur{} solves all instances with $n=50$ jobs up to optimality but cannot solve all instances with $n=100$ jobs.
This also shows that former standard benchmarks used in \cref{sec:experiment_sota} were not that challenging since there \AbbrIlpOur{} was able to solve instances with $n=190$.
Next, the results also suggest that the set of jobs' processing times has a significant effect on the complexity of the instances, even for \AbbrIlpOur{}.
For example, with $n=100$ jobs, all instances with $\ProcTime{\IdxJob}\in\{2,4\}$ were solved up to optimality, wheres for $\ProcTime{\IdxJob}\in\{3,7\}$ just 7 out of 20.
The computational times of \AbbrBAB{} remain almost unaffected, except the group of processing times $\{8,10\}$ with $n\in\{150,200\}$ jobs,  where 10 instances could not be solved up to the optimality. 
This surprising difficulty of these instances is also reflected in the performance of \AbbrIlpOur{} that could not find a feasible solution for any instance with $\ProcTime{\IdxJob}\in\{8,10\}$.
The detailed look on the instances where \AbbrBAB{} could not prove the optimality is given in \cref{tab:exp2-detail}.
There, for each specific unsolved instance, we see the best objective found~\AbbrObjective{}, best proven lower bound~\AbbrLb{}, and a number of expanded nodes by \AbbrBAB{}.
In all cases, the found solutions are typically provably within 0.02\% of the optimality.
In any case, the observed difficulty of these instances is quite remarkable---for example, in instance id=348, the absolute gap between the proven lower bound and the best solution found was only 19 units of cost.
We assume that one of the sources of the difficulty would be the fact that $\{8,10\}$ are two of the larger processing times used; thus, the relaxation procedure in lower bound computation from \cref{sec:bab-lb-gcd} has larger relaxation gap on average, even though $\gcd\{8,10\}=2>1$.
On the other hand, instances with $\ProcTime{\IdxJob}\in\{1,2,\ldots,10\}$ were solved up to the optimality even when $\gcd\{1,2,\ldots,10\}=1$.
It appears that these instances are, in fact, easier since larger variability in processing times allows us to solve the $\ProblemBinPack$ problem involved in primal upper bound computation more frequently, thus obtaining primal solutions matching the spaces produced by the lower bound procedure and closing the search subtrees, which is a phenomenon described further in the following section.
\begin{table}[ht]
    \centering
    \caption{Detailed view the instances that were not solved by \AbbrBAB{} to optimality.}
    
\begin{adjustbox}{max width=\textwidth}

\renewrobustcmd{\bfseries}{\fontseries{b}\selectfont}
\fontsize{8}{9}\selectfont   

\bgroup
\def\arraystretch{0.75}

\begin{tabular}{
    S[table-format=3.0]
    S[table-format=3.0]
    l @{\hskip \TabColSep}
    S[table-format=8.0]
    S[table-format=8.0]
    S[table-format=1.3]
    S[table-format=5.0]
    S[table-format=1.0]
    S[table-format=1.0]
    S[table-format=1.0]}
    \toprule
    \multicolumn{3}{c @{\hskip \TabColSep}}{Instance} & 
    \multicolumn{4}{c}{\AbbrBAB{}} \\
    \cmidrule(lr{\TabColSep}){1-3} \cmidrule{4-7}
    \multicolumn{1}{c}{\scriptsize id} &
    \multicolumn{1}{c}{\scriptsize $\NumJobs$} &
    \multicolumn{1}{c @{\hskip \TabColSep}}{\scriptsize proc. times} &
    \multicolumn{1}{c}{\scriptsize \AbbrObjective{} [-]} &
    \multicolumn{1}{c}{\scriptsize \AbbrLb{} [-]} &
    \multicolumn{1}{c}{\scriptsize gap [\%]} & 
    \multicolumn{1}{c}{\scriptsize \#nodes [-]} \\ 
    \midrule
    292 & 150 & $\{8, 10\}$ & 12352437 & 12349218 & 0.026 & 57457 \\
    320 & 150 & $\{8, 10\}$ & 12330437 & 12328008 & 0.020 & 60355 \\
    348 & 150 & $\{8, 10\}$ & 12288440 & 12288421 & 0.000 & 52854 \\
    376 & 150 & $\{8, 10\}$ & 12330437 & 12328008 & 0.020 & 61988 \\
    404 & 150 & $\{8, 10\}$ & 12074142 & 12071713 & 0.020 & 55328 \\
    432 & 200 & $\{8, 10\}$ & 16413555 & 16411549 & 0.012 & 27259 \\
    460 & 200 & $\{8, 10\}$ & 16293105 & 16290676 & 0.015 & 30450\\
    488 & 200 & $\{8, 10\}$ & 16433290 & 16430776 & 0.015 & 20572 \\
    516 & 200 & $\{8, 10\}$ & 16123137 & 16120217 & 0.018 & 26673\\
    544 & 200 & $\{8, 10\}$ & 16554373 & 16551944 & 0.015 & 25287\\
    \bottomrule
    \end{tabular}
\egroup

\end{adjustbox}
    \label{tab:exp2-detail}
\end{table}

\subsection{Instance hardness}\label{sec:experiments-hardness}

The varying difficulty of instances influenced by the specific processing time group revealed in \cref{tab:exp2-aggregation} motivates us to study average-case instance complexity in more detail.
We created a new set of instances under the same protocol as in~\cref{sec:experiments-real-prices}, however this time with $\NumJobs\in\{100,150,200\}$ and the following new groups of processing times: $\{9,10\}, \{8,9,10\}, \{8,9\}, \{7,8,9,10\}, \{7,8\}, \{10\},\{7,9\},\{1,2,10\}$. 
For each value of $n$ and a processing time group, we generated 20 instances.
The resulting runtimes of \AbbrBAB{} with a time limit of 10 minutes are displayed in~\cref{fig:ex_proctimes} with the logaritmic scale of the vertical axis. 
Each color represents one group of processing times, where all 60 instances belonging to each group are ordered with the increasing length of the horizon $h$, i.e., the number of intervals.


\begin{figure}[ht]
    \centering
    \pgfplotstableread[col sep=semicolon]{images_sources/hardness_sorted_by_groups_1,2,10.csv}\datatablea

    \pgfplotstableread[col sep=semicolon]{images_sources/hardness_sorted_by_groups_7,9.csv}\datatableb

    \pgfplotstableread[col sep=semicolon]{images_sources/hardness_sorted_by_groups_7,8,9,10.csv}\datatablec

     \pgfplotstableread[col sep=semicolon]{images_sources/hardness_sorted_by_groups_8,9.csv}\datatabled
     
     \pgfplotstableread[col sep=semicolon]{images_sources/hardness_sorted_by_groups_8,9,10.csv}\datatablee

     \pgfplotstableread[col sep=semicolon]{images_sources/hardness_sorted_by_groups_9,10.csv}\datatablef

     \pgfplotstableread[col sep=semicolon]{images_sources/hardness_sorted_by_groups_10.csv}\datatableg
    \begin{tikzpicture}
        \begin{axis}[
            width=0.9\textwidth,
            height=0.28\textwidth,
            ybar, 
            ymode=log,
            log origin=infty,
            label style={font=\small},
            tick label style={font=\scriptsize},
            xtick={20,85,145,210,273,335,400},
            xticklabels={${\{1,2,3\}}$,{$\{7,8,9,10\}$},{$\{7,9\}$},{$\{8,9\}$},{$\{8,9,10\}$},{$\{9,10\}$},{$\{10\}$}},
            ylabel={time [s]},
            xlabel={instance [-]},
            ymajorgrids,
            bar width=0.1pt,
            legend entries =  {{}{$\{1,2,10\}$},{}{$\{7,8,9,10\}$},  {}{$\{7,9\}$},  {}{$\{8,9\}$}, {}{$\{8,9,10\}$}, {}{$\{9,10\}$}, {}{$\{10\}$}},
            legend pos=outer north east,
            legend cell align=left
        ]
        \addplot+[opacity=0.4] table[y=RunningTimeSec] {\datatablea};
        \addplot+[opacity=0.4] table[x expr=\thisrowno{0}+60, y=RunningTimeSec] {\datatablec};
        \addplot+[opacity=0.4] table[x expr=\thisrowno{0}+120, y=RunningTimeSec] {\datatableb};
        \addplot+[opacity=0.4] table[x expr=\thisrowno{0}+180, y=RunningTimeSec] {\datatabled};
        \addplot+[opacity=0.4] table[x expr=\thisrowno{0}+240, y=RunningTimeSec] {\datatablee};
        \addplot+[opacity=0.4] table[x expr=\thisrowno{0}+300, y=RunningTimeSec] {\datatablef};
        \addplot+[opacity=0.4,color=magenta] table[x expr=\thisrowno{0}+360, y=RunningTimeSec] {\datatableg};

        \end{axis}
    \end{tikzpicture}
    \caption{Running times sorted by horizon length, split by processing time group for $n\in\{100,150,200\}$ jobs.}
    \label{fig:ex_proctimes}
\end{figure}
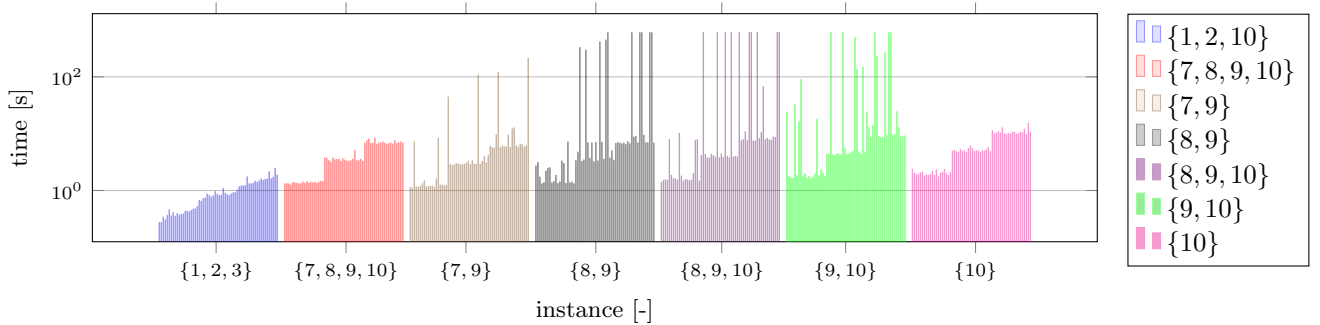

Under visual inspection, the dependence of the runtime on the length of the horizon $h$ can be seen in the form of a staircase function.
This is due to how the instances are generated, i.e., $h=1.3$ times the sum of all processing times. 
Thus, the three steps visible in each group correspond to instances with $n=100$, 150, and 200 jobs.
However, when focusing on the instance group containing processing times $8,9$ and 10, we observe more frequently the sudden spikes in the runtime with the greatest intensity around processing time group $\{8,9,10\}$.
The exact proportion of the instances that were not solved to the optimality (i.e., time limit reached -- TLR) is shown in \cref{fig:ex_timeout_hist}.

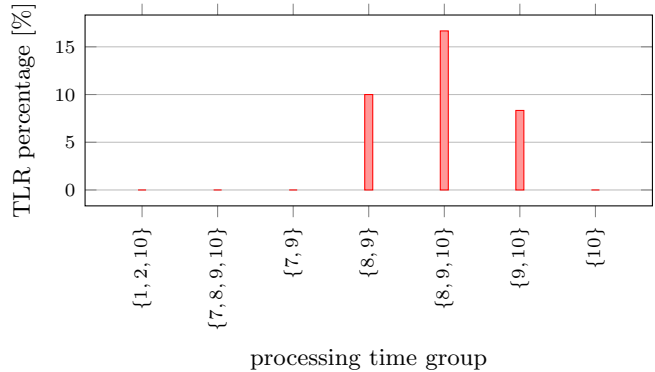
\begin{figure}[ht]
    \centering
    \pgfplotstableread[col sep=semicolon]{images_sources/timeout_counts_hist_reverse.csv}\datatable
    \begin{tikzpicture}
        \begin{axis}[
            width=0.55\textwidth,
            height=0.25\textwidth,
            ybar, 
            label style={font=\small},
            tick label style={font=\scriptsize},
            xtick=data,
            xticklabels={{}{$\{1,2,10\}$},{}{$\{7,8,9,10\}$},{}{$\{7,9\}$},{}{$\{8,9\}$}, {}{$\{8,9,10\}$}, {}{$\{9,10\}$},{}{$\{10\}$}},
            xticklabel style={rotate=90},
            ylabel={TLR percentage [\%]},
            xlabel={processing time group},
            ymajorgrids,
            bar width=3pt,
            enlarge x limits={abs=0.75cm}, 
        ]
        \addplot[fill=red!40,draw=red] table[x index=0,y=TimeoutPercentage] {\datatable};
        \end{axis}
    \end{tikzpicture}
    \caption{Proportion of instances that were not solved up to the optimality.}
    \label{fig:ex_timeout_hist}
\end{figure}

It appears to us that this phenomenon has an explanation following from a bin-packing structure of the problem.
The real-life energy price profiles share a structure with more or less periodically repeating areas with lower average costs, interleaved by areas with intervals having higher average costs, as can be seen in \cref{fig:example-profile}.
The algorithm that optimizes TEC avoids areas with high-priced intervals.
Depending on the specific values of transition time and power functions of the machine, the solution switches between off-peak intervals, in which the resource is typically in \StateProc{} state, and peak intervals, where the resource is often in \StateIdle{}/\StateOff{} states.
The sets of consecutive intervals where the machine is in \StateProc{} state then form bins, as it was introduced in \cref{sec:binpack_primal_heur}.
The combinatorial nature of the problem is then hidden within the task of how to fit jobs with given processing times into the bins.
This aspect shows that the considered problem has deeper connections to number partitioning and bin packing problems.

Instance hardness in the context of the average-case complexity of the number partition problem~(NPP) was studied by \cite{mertens2003easiesthardproblemnumber}.
Mertens shows that NPP admits a sharp phase transition of the instances, where we quickly transition from instances that almost surely have a perfect partitioning to instances that very likely do not have one, which influences the empirical runtime of the algorithm.
This transition is not directly related to the amount of the number itself, but to the specific structure of the numbers.
Paradoxically, even larger problems may become easier to solve.
Our problem better matches the similar analysis for parallel machine scheduling problem by \cite{bauke2003phase}.
For $P||C_\text{max}$ or bin-packing problem, it is notoriously known that, e.g., the presence of short jobs (items) leads to easier instances since it is easy to fill up the remaining space by the short jobs. 
The same applies to our problem --- having larger variability in jobs' processing times, especially combined with the presence of short jobs, increases the probability of perfect partitions, i.e., situations where primal upper bound heuristic based on a packing problem matches the bins proposed by the lower bound, allowing to close nodes in the search tree earlier.
In our setting, the phase transition appears to be around the group $\{8,9,10\}$, where the probability of encountering a hard instance increases.
On the opposite extreme, when all processing times are equal (i.e., the group $\ProcTime{\IdxJob}\in\{10\}$), all jobs become interchangeable, and since $\gcd=10$, the problem instances are easy again.
Therefore, we conclude that the number of jobs $n$ in the instance is not the main limiting factor of \AbbrBAB{} algorithm, but rather it is the presence of long jobs with low processing time variability within the group.

\section{Conclusion}



In this work, we addressed the problem of total energy cost minimization on a single machine with different states under time-of-use tariffs.
This problem, arising in production scheduling with energy-intensive manufacturing operations, plays a critical role in reducing energy bills.
The importance of the problem was recognized by \cite{2014:Shrouf}, who coined the problem with several authors to follow \citep{2018:Aghelinejad,aghe2019toustates,2020a:Benedikt} to follow.
The breakthrough came when \cite{2020a:Benedikt} showed that one can simplify state transitions of the machine by solving the optimal switching problem and use precomputed switching between any pair of intervals inside an ILP model without an explicit modeling.

In this work, we took this idea one step further. 
We realized that the spaces suggested by the optimal switching problem could be seen as a kind of bins to which we allocate the jobs.
This led to the design of a branch-and-bound algorithm \AbbrBAB{} that utilizes the bin packing structure by applying initial and primal heuristics and also a lower bound computation based on the polynomial-time solution of the problem for the fixed order of the jobs.
We improved on the approach proposed in~\cite{aghe2019toustates} by incorporating only the optimal switching costs, leading to a much smaller job-interval graph, therefore enabling fast lower bound computations.
These insights and improvements led to achieving a new state-of-the-art performance on the standard benchmark set by two orders of magnitude, enabling the solutions of larger instances than ever possible.
Furthermore, we have observed that the standard benchmark set is extremely well-suited for the bin packing structure utilized by \AbbrBAB{}; thus, we also designed new, much harder benchmarks considering true historical energy price profiles.

The increased efficiency of \AbbrBAB{} algorithm enables its use in practice where we can solve problems with horizon well over a week considering a non-trivial number of jobs quickly, enabling online recalculation of the schedule when energy prices change.
Nevertheless, we acknowledge one of the limitations of our approach, which is that the considered problem statement and our algorithm assume that the energy requirements of the jobs are proportional to their processing time, i.e., the uniform energy consumption.
However, we believe that our approach might be extended to support these considerations---lower bound procedure could be recovered for non-uniform energy consumption~\citep{fang2016singlemach_tou,aghe2019toustates}, but the bigger challenge seems to be designing an efficient primal heuristic such that it frequently finds a solution matching the lower bound, which is the key to the efficiency of \AbbrBAB{}.

We believe that other research may benefit from our results as well.
For example, the idea of exposing the bin packing structure of the problem via the optimal switching problem and lower bound computation, suggesting the spaces that act as bins for the jobs, is particularly useful. 
Considering how well the bin packing problem is solvable in practice, similar problems could be accelerated as well, even when the packing structure of the problem is not exposed at first sight.
Moreover, our idea of incorporating optimal switching costs into the job-interval graph led to improved complexity of the algorithm for the fixed ordering of the jobs, which could be applied to similar problems.

Finally, we outline some ideas for the extension of the problem and the methods proposed.

First, if the jobs have release times and deadlines, these would be reflected by pruning some of the edges in the job-interval graph, but some adjustments should be made to the lower and upper bound computation.
Moreover, it seems to us that our approach could be extended to model machines with multiple dedicated processing states and job families, where each family could be processed only in its dedicated state (e.g., workpieces requiring different hardening temperatures).
These would likely require extending the interval-state graph, but they also would introduce extra edges and vertices into the job-interval graph to model transitions between pairs of different processing states.


\section*{Acknowledgement}
This work was co-funded by the European Union under the project ROBOPROX (reg. no.\\ CZ.02.01.01/00/22\_008/0004590) and the Grant Agency of the Czech Republic under the Project GACR 25-17904S.
Moreover, we would like to thank Theodor Krocan for his insightful comments.

\appendix
\setcounter{equation}{0}\renewcommand\theequation{A\arabic{equation}}
\section*{Appendix}
\renewcommand{\thesubsection}{\Alph{subsection}}

\subsection{Computation of the Optimal State Switching}\label{sec:spaces}

This section describes the problem of pre-computing the optimal switching between the machine states in given intervals while minimizing the power consumption introduced in \cite{2020a:Benedikt}.
After introducing the problem, we summarize the algorithm that computes the optimal switching costs in a polynomial time by finding the shortest paths in the so-called interval-state graph.

\paragraph{Optimal switching problem}
Given the state $\IdxState$ in interval $\Interval{\IdxInterval}$ and state $\IdxAnother{\IdxState}$ in interval $\Interval{\IdxAnother{\IdxInterval}}$ such that $\IdxInterval <  \IdxAnother{\IdxInterval}$, 
the problem finds the optimal transitions from \( (\IdxState, \Interval{\IdxInterval}) \) to \( (\IdxAnother{\IdxState}, \Interval{\IdxAnother{\IdxInterval}}) \) over all possible states $\SetStates$ with respect to the energy cost $\EnergyCostVector$.
Formally, it represents the following optimization problem
\begin{equation} \label{eq:optimal-switching-problem}
    \min_{\SolTrans{\IdxInterval + 1}, \SolTrans{\IdxInterval + 2}, \dots, \SolTrans{\IdxAnother{\IdxInterval} - 1}} \sum_{j = \IdxInterval + 1}^{\IdxAnother{\IdxInterval} - 1} \EnergyCost{j} \cdot \TransPower[\SolTrans{j}].
\end{equation}
such that \( ((\IdxState, \IdxState), \SolTrans{\IdxInterval + 1}, \SolTrans{\IdxInterval + 2}, \dots, \SolTrans{\IdxAnother{\IdxInterval} - 1}, (\IdxAnother{\IdxState}, \IdxAnother{\IdxState}) ) \) form permissible transitions according to transition time function $\TransTime$.
%

\paragraph{Interval-state graph}
It was shown that the optimal switching problem can be solved in polynomial time by finding the shortest path in an \DefTerm{interval-state} graph~\citep{2020a:Benedikt}.
The interval-state graph $G^{\text{is}}$ is defined by a triplet \(G^{\text{is}}= (\SetVertTrans, \SetEdgesTrans, \WeightTrans) \), where \( \SetVertTrans \) is the set of \DefTerm{vertices}, \( \SetEdgesTrans \) is the set of \DefTerm{edges} and \( \WeightTrans: \SetEdgesTrans \rightarrow \mathbb{Z} \) are the \DefTerm{weights} of the edges.
Given time horizon $\NumIntervals$, the sets $\SetVertTrans$ and $\SetEdgesTrans$ are given as: 
\begin{align}
    \SetVertTrans & = \{ \VertTrans{1}{\StateOff}, \VertTrans{\NumIntervals + 1}{\StateOff} \} \cup \{ \VertTrans{\IdxInterval}{\IdxState} : \Interval{\IdxInterval} \in \SetIntervals \setminus \{ \Interval{1} \}, \IdxState \in \SetStates \}, \\
    \begin{split}
        \SetEdgesTrans & = \{ (\VertTrans{1}{\StateOff}, \VertTrans{2}{\StateOff}) \} \cup \{  (\VertTrans{\NumIntervals}{\StateOff}, \VertTrans{\NumIntervals + 1}{\StateOff}) \}  \\
        & \phantom{{}={}} \cup \{ (\VertTrans{\IdxInterval}{\IdxState}, \VertTrans{\IdxInterval + \TransTime[\IdxState,\IdxAnother{\IdxState}]}{\IdxAnother{\IdxState}}) : \IdxState,\IdxAnother{\IdxState} \in \SetStates,  \Interval{\IdxInterval} \in \SetIntervals \setminus \{ \Interval{1} \}, \,\TransTime[\IdxState,\IdxAnother{\IdxState}] \not= \infty, (\IdxInterval - 1) + \TransTime[\IdxState,\IdxAnother{\IdxState}]  \le \NumIntervals - 1 \}\,.
    \end{split}
\end{align}
Each vertex \( \VertTrans{\IdxInterval}{\IdxState} \in \SetVertTrans \) represents that at the beginning of interval \( \Interval{\IdxInterval} \) the machine is in state~\( \IdxState \).
Each edge \( (\VertTrans{\IdxInterval}{\IdxState}, \VertTrans{\IdxAnother{\IdxInterval}}{\IdxAnother{\IdxState}}) \in \SetEdgesTrans \) corresponds to the direct transition from state \( \IdxState \) to state \( \IdxAnother{\IdxState} \) that lasts \( \TransTime[\IdxState,\IdxAnother{\IdxState}] = (\IdxAnother{\IdxInterval} - \IdxInterval) \) intervals.
The condition \( (\IdxInterval - 1) + \TransTime[\IdxState,\IdxAnother{\IdxState}]  \le \NumIntervals - 1 \) ensures, that only transitions completing at most at the beginning of interval \( \Interval{\NumIntervals} \) are present in the interval-state graph.
Weight of edge \( (\VertTrans{\IdxInterval}{\IdxState}, \VertTrans{\IdxAnother{\IdxInterval}}{\IdxAnother{\IdxState}}) \in \SetEdgesTrans \) is defined as
\begin{equation}
    \WeightTrans[\VertTrans{\IdxInterval}{\IdxState}, \VertTrans{\IdxAnother{\IdxInterval}}{\IdxAnother{\IdxState}}] = \sum_{j = \IdxInterval}^{\IdxAnother{\IdxInterval} - 1} \EnergyCost{j} \cdot \TransPower[\IdxState, \IdxAnother{\IdxState}] \,,
\end{equation}
i.e., the total energy cost of the corresponding transition with respect to the costs of energy in intervals.

The optimal transitions from \( (\IdxState, \Interval{\IdxInterval}) \) to \( (\IdxAnother{\IdxState}, \Interval{\IdxAnother{\IdxInterval}}) \) with respect to the energy cost can be obtained by finding the shortest path from \( \VertTrans{\IdxInterval + 1}{\IdxState} \) to \( \VertTrans{\IdxAnother{\IdxInterval}}{\IdxAnother{\IdxState}} \) in the interval-state graph. 
We denote the cost of the optimal switching by function \( \PathCostTrans: \SetVertTrans \times \SetVertTrans \rightarrow  \mathbb{Z} \cup \{\infty\}  \), where the value $\infty$ is used whenever the path between the two vertices does not exist.  
Basically, the interval-state graph is a layered graph that can be seen as a time expansion of the transition time function $\TransTime$ along the horizon defined by intervals $\SetIntervals$, with weights derived from power function $\TransPower$ and energy cost $\EnergyCostVector$.
See \citep{2020a:Benedikt} for an example of an interval-state graph.

\begin{Example}{4}
Consider an interval-state graph representing problem instance from \cref{fig:example-schedule} with a transition graph from Example 1.
For example, the optimal transition of the machine from $\StateProc$ state at interval $\Interval{8}$ to $\StateProc$ state at interval $\Interval{14}$ looks as follows.
At first, the machine is turned off (during $\Interval{9}$), then it remains off (during intervals $\Interval{10}$ and $\Interval{11}$), and finally, it is turned on (intervals $\Interval{12}$, $\Interval{13}$). The cost for this whole path is $\PathCostTrans[\VertTrans{9}{\StateProc}, \VertTrans{14}{\StateProc}] = 76$.


\end{Example}

The values of \( \PathCostTrans \) can be computed using the Floyd-Warshall algorithm that computes all-pairs shortest path between $n$ vertices of the given directed graph in $\AsymCompUpper{n^3}$ steps by iteratively updating the length of the shortest path between every pair of vertices $i$ and $j$, using only vertices $\{1, 2, \ldots, k\}$.
When $k=n$, the algorithm finishes with the optimal values of all-pairs shortest path. 
The interval-state graph $G^{\text{is}}$ has at most $\NumIntervals \cdot \NumStates$ vertices, where $\NumIntervals$ is the length of the horizon (number of considered intervals) and $\NumStates$, is the number of the states of the resource; therefore, the total running time can be upper bounded as \( \AsymCompUpper{\NumIntervals^3 \cdot \NumStates^3} \).


However, only some of the switchings can be considered for scheduling decisions.
Since all the jobs need to be scheduled in the $\StateProc$ state of the machine, the optimal switchings have to be resolved only in (i) between two consecutive intervals with \( \StateProc \); (ii) between the first $\StateOff$ and the first \( \StateProc \); and (iii) the last \( \StateProc \) and the last $\StateOff$. 
The cost of the switchings between \( \IdxState, \IdxAnother{\IdxState} \in \{ \StateOff, \StateProc\}^2\) are specified by function \( \OptCostTrans : \SetIntervals^2 \rightarrow \mathbb{Z} \) defined as 
\begin{equation}
    \OptCostTrans[\IdxInterval, \IdxAnother{\IdxInterval}]=
    \left\lbrace \begin{aligned}
         & \PathCostTrans[\VertTrans{\IdxInterval + 1}{\StateProc}, \VertTrans{\IdxAnother{\IdxInterval}}{\StateProc}] & \IdxInterval > 1, \IdxAnother{\IdxInterval} < \NumIntervals & \quad\quad \text{\footnotesize{case (i)}} \\
         & \PathCostTrans[\VertTrans{2}{\StateOff}, \VertTrans{\IdxAnother{\IdxInterval}}{\StateProc}] & \IdxInterval = 1, \IdxAnother{\IdxInterval} < \NumIntervals & \quad\quad \text{\footnotesize{case (ii)}} \\
         & \PathCostTrans[\VertTrans{\IdxInterval + 1}{\StateProc}, \VertTrans{\NumIntervals}{\StateOff}] & \IdxInterval > 1, \IdxAnother{\IdxInterval} = \NumIntervals & \quad\quad \text{\footnotesize{case (iii)}}
    \end{aligned} \right.
\end{equation}
for each pair of intervals \( \IdxInterval < \IdxAnother{\IdxInterval} \). The vector of states corresponding to $\OptCostTrans[\IdxInterval, \IdxAnother{\IdxInterval}]$, i.e., the \DefTerm{optimal switching behavior} of the machine between $\IdxInterval$ and $\IdxAnother{\IdxInterval}$, is denoted by $\OptPathTrans[\IdxInterval, \IdxAnother{\IdxInterval}]$.

\begin{ExampleCont}{4}
For the illustration, see the \cref{fig:example-schedule}. There, intervals \( \{ \Interval{9}, \Interval{10}, \dots, \Interval{13} \}\) represent the space between two jobs \( \Job{2}, \Job{1} \) with cost \( \OptCostTrans[8,14] = \PathCostTrans[\VertTrans{9}{\StateProc}, \VertTrans{14}{\StateProc}] = 76 \).

The optimal switching behavior is
\begin{equation}
    \OptPathTrans[8,14] = ((\StateProc, \StateOff), (\StateOff, \StateOff), (\StateOff, \StateOff), (\StateOff,\StateProc), (\StateOff,\StateProc)).
\end{equation}
\end{ExampleCont}

Values of \( \OptCostTrans \) can be computed efficiently using the algorithm called the \DefTerm{Shortest Path Algorithm for Cost Efficient Switchings} (\AbbrIwa{})~\citep{2020a:Benedikt}.
For each  value \( \Interval{\IdxInterval} \in \SetIntervals \setminus \{ \Interval{\NumIntervals} \} \), \AbbrIwa{} computes all values \( \OptCostTrans[\IdxInterval, \IdxInterval + 1], \OptCostTrans[\IdxInterval, \IdxInterval + 2], \dots, \OptCostTrans[\IdxInterval, \NumIntervals] \) by computing the shortest paths tree from \( \VertTrans{\IdxInterval + 1}{\StateProc} \) (or \( \VertTrans{2}{\StateOff} \) if \( \IdxInterval = 1 \)) to all other vertices in the interval-state graph.
If all weights $\WeightTrans$ happen to be nonnegative (e.g., when energy cost $\EnergyCostVector\geq\boldsymbol{0}$), then the shortest path tree is obtained with the Dijkstra algorithm that runs in \( \AsymCompUpper{|\SetEdgesTrans| + |\SetVertTrans| \cdot \log |\SetVertTrans|}\).
Since the Dijkstra algorithm is started \( \NumIntervals \) times, the total complexity is 
\begin{equation}
    \AsymCompUpper{\NumIntervals \cdot ( |\SetEdgesTrans| + |\SetVertTrans| \cdot \log |\SetVertTrans|)} = \AsymCompUpper{\NumIntervals^2 \cdot \NumStates \cdot (\NumStates + \log \NumIntervals +\log \NumStates)}\,.
\end{equation}
In case of the presence of negative weights $\WeightTrans$, one has to resort to the Bellman-Ford algorithm with complexity $\AsymCompUpper{|\SetVertTrans|\cdot |\SetEdgesTrans|}$, thus in total, the complexity would be $\AsymCompUpper{\NumIntervals^3\cdot|\SetStates|^3}$.



\bibliography{references}

\end{document}